\newcommand{\metric}[2]{\ensuremath{\left\langle #1, #2\right\rangle_2}}  %% command for the metric  < , >_2
\newcommand{\R}{\ensuremath{\mathbb{R}}}
\newcommand{\C}{\ensuremath{\mathbb{C}}}
\newcommand{\Cc}{\mathbb{C}^{n+2}_2}
\newcommand{\CH}{\mathbb{C}H^{n+1}_1}
\newcommand{\V}{{V^{*}}^{2n+1}_1}
\newcommand{\Q}{{Q^{*}}^n}
\renewcommand{\H}{H^{2n+3}_3}
\newcommand{\lccQ}{\nabla^{{Q^*}^n}}
\DeclareMathOperator{\re}{Re}    %% Real part
\DeclareMathOperator{\im}{Im}    %% Imaginary part
\DeclareMathOperator{\spanned}{span}
\newtheorem{theorem}{Theorem}[section]   %% definition of theorem environment
\newtheorem*{theorem*}{Theorem}          %% a theorem environment without numbering
\newtheorem{lemma}[theorem]{Lemma}
\newtheorem{proposition}[theorem]{Proposition}
\theoremstyle{definition}
\newtheorem{definition}[theorem]{Definition}
\newtheorem{corollary}[theorem]{Corollary}
\newtheorem{example}{Example}
\newtheorem{choice}{Choice}
\newtheorem*{remark}{Remark}
\title[Lagrangian submanifolds of the complex hyperbolic quadric]{Lagrangian submanifolds \\ of the complex hyperbolic quadric}
\author{Joeri Van der Veken}
\author{Anne Wijffels}
\address{KU Leuven, Department of Mathematics, Celestijnenlaan 200B - Box 2400, 3001 Leuven, Belgium}
\email{joeri.vanderveken@kuleuven.be}
\email{anne.wijffels@kuleuven.be}
\thanks{The first author is supported by the Excellence Of Science project G0H4518N of the Belgian government and both authors are supported by project 3E160361 of the KU Leuven Research Fund.}
\subjclass[2010]{Primary: 53C42; Secondary: 53D12; 53B25}
\begin{document}

\begin{abstract}
We consider the complex hyperbolic quadric ${Q^*}^n$ as a complex hypersurface of complex anti-de Sitter space. Shape operators of this submanifold give rise to a family of local almost product structures on ${Q^*}^n$, which are then used to define local angle functions on any Lagrangian submanifold of ${Q^*}^n$. We prove that a Lagrangian immersion into ${Q^*}^n$ can be seen as the Gauss map of a spacelike hypersurface of (real) anti-de Sitter space and relate the angle functions to the principal curvatures of this hypersurface. We also give a formula relating the mean curvature of the Lagrangian immersion to these principal curvatures. The theorems are illustrated with several examples of spacelike hypersurfaces of anti-de Sitter space and their Gauss maps. Finally, we classify some families of minimal Lagrangian submanifolds of ${Q^*}^n$: those with parallel second fundamental form and those for which the induced sectional curvature is constant. In both cases, the Lagrangian submanifold is forced to be totally geodesic.  
\end{abstract}

\maketitle

%%%%%%%%%%%%%%%%%%%%%%
\section{Introduction}
%%%%%%%%%%%%%%%%%%%%%%

The complex quadric $Q^n$ is a K\"ahler-Einstein manifold, which can be seen in several different ways, for example as a complex hypersurface of the complex projective space $\mathbb C P^{n+1}$, as the Grassmannian manifold of oriented $2$-planes in $\R^{n+2}$ or as the homogeneous space
\[Q^n = \frac{\mathrm{SO}(2+m)}{\mathrm{SO}(2) \times \mathrm{SO}(m)}.\]
Minimal Lagrangian immersions into $Q^n$ were studied for example in \cite{MaOhnita2014}, \cite{MaOhnita2015}, \cite{IriyehMaMiyaokaOhnita2016} and \cite{LMVdVVW}, by identifying them with Gauss maps of isoparametric hypersurfaces of the unit sphere $S^{n+1}(1)$. The relation between the geometric invariants of (not necessarily minimal) Lagrangian submanifolds of $Q^n$ and (not neccesarily isoparametric) hypersurfaces of $S^{n+1}(1)$ was stated in full generality in \cite{VW}.
	
In the present paper, we study Lagrangian submanifolds of the \textit{complex hyperbolic quadric}. This is the homogeneous space 
\[\Q= \frac{\mathrm{SO}^{0}(2,m)}{\mathrm{SO}(2) \times \mathrm{SO}(m)}, \]
which can be identified with the Grassmannian manifold of negative definite oriented $2$-planes in the indefinite vector space $\R^{n+2}_2$. It is known from \cite{MontielRomero} that we can see $\Q$ as a complex hypersurface of the complex anti-de Sitter space $\mathbb C H^{n+1}_1$. We will discuss this immersion in more detail in Section~\ref{sec:1}. In particular, we explain how the induced geometric structures make $\Q$ into a homogeneous (Riemannian) K\"ahler-Einstein manifold, carrying a family of local almost product structures. The complex hyperbolic quadric ${Q^{*}}^1$ of complex dimension $1$ is isometric to the hyperbolic plane $H^2(-2)$ of constant Gaussian curvature $-2$, whereas the complex hyperbolic quadric ${Q^{*}}^2$ of complex dimension $2$ is isometric to the Riemannian product $H^2(-4)\times H^2(-4)$. The latter isometry follows for example from \cite{jensen}, since ${Q^{*}}^2$ is a real $4$-dimensional Einstein manifold whose minimal sectional curvature is $-4$, while it does not have constant sectional curvature, nor constant holomorphic sectional curvature. 

Submanifolds of the complex hyperbolic quadric, in particular curves and real hypersurfaces, have recently attracted attention of several geometers, see for example \cite{Suh2016}, \cite{KimSuh2019} and \cite{KleinSuh2019}. In the present paper, we deal with Lagrangian submanifolds of $\Q$. Inspired by the situation in $Q^n$, where Lagrangian immersions correspond to Gauss maps of spherical hypersurfaces, we define the Gauss map of a spacelike hypersurface of the anti-de Sitter space $H^{n+1}_1(-1)$ as a map into $\Q$, which turns out to be a Lagrangian immersion. Conversely, Theorem~\ref{theo:LagrangiansandGaussmaps} states that one can see any Lagrangian submanifold of $\Q$, at least locally, as the Gaussian image of a hypersurface of $H^{n+1}_1(-1)$. This hypersurface will not be unique, as parallel hypersurfaces have the same Gauss maps. Nevertheless, Theorem~\ref{theo:LagrangiansandGaussmaps} also gives a relation between combinations of the principal curvatures of a hypersurface, which do not change when replacing the hypersurface by a parallel hypersurface, and the so-called \textit{angle functions} of the Gaussian image. The angle functions will be fundamental invariants of Lagrangian submanifolds of $\Q$. Theorem~\ref{theo:Palmerformula} gives an analogue of a formula proven in \cite{Palmer1997} for the case of $Q^n$ in the case of $\Q$: it expresses the mean curvature of a Lagrangian submanifold of $\Q$ in terms of the principal curvatures of a corrsponding hypersurface of $H^{n+1}_1(-1)$. We provide several examples of spacelike hypersurfaces of anti-de Sitter space and their Gauss maps to illustrate the theorems.

In the final two sections of the paper we classify two special classes of minimal Lagrangian submanifolds of $\Q$, namely those with parallel second fundamental form (Theorem \ref{theo:parallel_sff}) and those for which the induced sectional curvature is constant (Theorem \ref{theo:csc}). In both cases, the Lagrangian immersions have to be totally geodesic and they correspond to Gauss maps of well-known families of spacelike hypersurfaces of $H^{n+1}_1(-1)$.

%%%%%%%%%%%%%%%%%%%%%%%%%%%%%%%%%%%%%%%%%%%%%%%%%%%%%%%%%%%%%%%%%%%%%%%%%%%
\section{The geometry of the complex hyperbolic quadric $\Q$} \label{sec:1}
%%%%%%%%%%%%%%%%%%%%%%%%%%%%%%%%%%%%%%%%%%%%%%%%%%%%%%%%%%%%%%%%%%%%%%%%%%%

We first introduce some notation which will be used throughout the paper. For integers $d$ and $i$ satisfying $0 \leq i \leq d$, the pseudo-Euclidean space of dimension $d$ and index $i$, denoted as $\mathbb R^d_i$, is $\mathbb R^d$ equipped with the metric $\langle (x_1,\ldots,x_d),(y_1,\ldots,y_d)\rangle_i = -x_1y_1 - \ldots - x_iy_i + x_{i+1}y_{i+1} + \ldots + x_dy_d$. The pseudo-hyperbolic space $H^d_i(c)$ of dimension $d$, index $i$ and constant sectional curvature $c<0$ is then
$$ H^d_i(c) = \left\{ x \in \mathbb R^{d+1}_{i+1} \ \left| \ \langle x,x \rangle_{i+1} = \frac{1}{c} \right. \right\}. $$
In particular, for $i=0$, we have the $d$-dimensional hyperbolic space $H^d_0(c) = H^d(c)$ and for $i=1$, we have the $d$-dimensional anti-de Sitter space $H^d_1(c)$.

To define the complex hyperbolic quadric, we first consider the complex space $\C^{n+2}_2$, which is $\mathbb C^{n+2}$ with the metric $\langle\!\langle(z_0,\ldots,z_{n+1}),(w_0,\ldots,w_{n+1})\rangle\!\rangle_2 =\re( -z_0\bar{w}_0- z_1\bar{w}_1 + \cdots + z_{n+1}\bar{w}_{n+1})$. Remark that, under the natural identification of $\mathbb C$ with $\mathbb R^2$, we have $\mathbb C^{n+2}_2 \approx \mathbb R^{2n+4}_4$. We now define the complex anti-de Sitter space $\CH$ as the set of all complex 1-dimensional subspaces of $\Cc$, on which $\langle\!\langle\cdot,\cdot\rangle\!\rangle_2$ is negative definite. If we equip $\CH$ with a natural differential structure, the projection map $\{ z \in \Cc \ | \ \langle\!\langle z,z \rangle\!\rangle_2 < 0 \} \to \CH : z \mapsto [z]$, where $[z]$ denotes the complex line generated by $z$, is a submersion. This remains true if we restrict the map to $\H(-1) \subset \mathbb R^{2n+4}_4$. (Note that, under the identification $\mathbb R^{2n+4}_4 \!\approx\! \mathbb C^{n+2}_2$, we have $\H(-1) \subset \{ z \in \Cc \ | \ \langle\!\langle z,z \rangle\!\rangle_2 < 0 \}$.) From now on, we refer to the map  
\[ \pi: \H(-1) \subset \Cc\rightarrow \CH : z\mapsto [z] \]
as the \textit{Hopf fibration}. If we equip $\CH$ with a pseudo-Riemannian metric $g$ such that $\pi$ becomes a pseudo-Riemannian submersion, it turns out that this metric has to have complex index $1$ (as already suggested by the notation $\CH$) and constant holomorphic sectional curvature $-4$. We denote $\CH(-4) = (\CH,g)$. 

Note that the fibers of the Hopf fibration are timelike curves. In fact, for any $z \in \H(-1)$, we have that $\pi^{-1}([z])=\{e^{it}z \ | \ t \in \R\}$ and $\ker(d\pi)_z = \spanned\{\xi_z\}$, where $\xi_z= iz$ satisfies $\langle\!\langle \xi_z,\xi_z \rangle\!\rangle_2 = \langle\!\langle z,z \rangle\!\rangle_2 = -1$. The complex structure $J$ on $\CH(-4)$ is induced by multiplication by $i$ on $T\H(-1)$ and $(\CH(-4), g, J)$ is a K\"ahler manifold.
	
We define the \textit{complex hyperbolic quadric} of dimension $n$ as
\begin{equation*}
\Q=\left\{\:[(z_0,z_1,\dots,z_{n+1})]\in\CH(-4)\:|\: -z_0^2 -z_1^2 + \cdots + z_{n+1}^2 =0\:\right\}.
\end{equation*}
If $\Q$ is equipped with the induced metric $g|_{\Q}$, which we will again denote by $g$, and the induced almost complex structure $J|_{\Q}$, which we will again denote by $J$, then $(\Q, g,J)$ is a K\"ahler manifold itself.
	
The inverse image of $\Q$ under the Hopf fibration is given by
\begin{equation} \label{eq:defV}
\V=\left\{ u+iv \:\left|\: u,v \in\R^{n+2}_2, \metric{u}{u}= \metric{v}{v}= - \frac{1}{2}, \metric{u}{v} = 0 \right.\right\} \subset \H(-1). 
\end{equation}
Remark that $\V$ is a submanifold of $\H(-1)$ of real dimension $2n+1$. Its real index is $1$, since the normal space to $\V$ as a submanifold of $\H(-1) \subset \mathbb C^{n+2}_2$ at $z$ is spanned by the orthogonal vectors $\bar{z}$ and $i\bar{z}$, satisfying $\langle\!\langle \bar z, \bar z \rangle\!\rangle_2 = \langle\!\langle i\bar z, i\bar z \rangle\!\rangle_2 = -1$. This implies that $\Q$ is indeed a Riemannian submanifold of $\CH(-4)$, where the normal space $T_{[z]}^{\perp}\Q$ is spanned by $(d\pi)_z(\bar{z})$ and $J(d\pi)_z(\bar{z})=(d\pi)_z(i\bar{z})$. 
	
From \cite{MontielRomero} we have the following.
%Another way to see this is using the Hopf-fibration, since for horizontal tangent vector fields on $\V$ it holds that $\widetilde{A}^2=\id$, where $\widetilde{A}$ is the lift of $A$ meaning it is itself a shape operator of the immersion of $\V$ in $\H(-1)$. 
	
\begin{lemma}\label{lem:PropertiesOfA}
Let $\mathcal{A}$ be the set of shape operators of $\Q$ as a submanifold of $\CH(-4)$, associated to (locally defined) unit normal vector fields. Then any $A\in\mathcal{A}$ is symmetric, involutive and anti-commutes with $J$. In particular, $A$ is an almost product structure on (an open subset of) $\Q$.
\end{lemma}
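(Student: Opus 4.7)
The plan is to compute the shape operator concretely by lifting to the Hopf fibration $\pi:\V\to\Q$ and then reading off the three properties from the resulting formula.

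First I would describe $T_z\V$ at a point $z\in\V$ using the complex bilinear form $B(z,w)=-z_0w_0-z_1w_1+z_2w_2+\cdots+z_{n+1}w_{n+1}$ and the associated Hermitian form $H(z,w)=B(z,\bar w)$: the horizontal tangent space at $z$ is
\[
\{w\in\Cc\mid B(z,w)=0,\ H(z,w)=0\},
\]
and $(d\pi)_z$ identifies it with $T_{[z]}\Q$. The normal space $T^{\perp}_{[z]}\Q$ is spanned by the timelike unit vectors $\tilde N:=(d\pi)_z\bar z$ and $J\tilde N=(d\pi)_z(i\bar z)$, reflecting that the normal bundle of $\Q$ in $\CH(-4)$ is negative definite.

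Next I would compute $A:=A_{\tilde N}$. Using that $\H(-1)$ is umbilical in $\Cc$ with second fundamental form $h^{\H}(X,Y)=\langle\!\langle X,Y\rangle\!\rangle_2\,z$, and that $\langle\!\langle w,\bar z\rangle\!\rangle_2=\re B(z,w)=0$ for horizontal tangent $w$, the Levi-Civita derivative in $\H(-1)$ of the local normal field $z\mapsto\bar z$ reduces to the flat derivative: $\lccH_w\bar z=\bar w$. A direct check shows $\bar w$ is itself horizontal tangent to $\V$ (indeed $B(z,\bar w)=H(z,w)=0$ and $H(\bar w,z)=\overline{B(w,z)}=0$), so the shape operator of $\V$ in $\H(-1)$ descends via the submersion to
\[
AW=-(d\pi)_z\bar w,\qquad W=(d\pi)_z w.
\]
The three properties are then immediate for $A$: it is symmetric because any shape operator is; iterating the formula gives $A^2W=-(d\pi)_z\overline{-\bar w}=W$; and the horizontal lift of $JW$ being $iw$ yields $A(JW)=-(d\pi)_z\overline{iw}=(d\pi)_z(i\bar w)=J(d\pi)_z\bar w=-JAW$. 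To pass to an arbitrary local unit normal I would write $\xi=\cos\theta\,\tilde N+\sin\theta\,J\tilde N$ and invoke $A_{J\tilde N}=J\circ A_{\tilde N}$, which holds because $\Q$ is a K\"ahler submanifold of $\CH(-4)$. Then $A_\xi=\cos\theta\,A+\sin\theta\,JA$, and the relations $A^2=\id$ and $AJ=-JA$ cause the mixed terms in $A_\xi^2$ to cancel, giving $A_\xi^2=(\cos^2\theta+\sin^2\theta)\,A^2=\id$; symmetry and anti-commutativity with $J$ pass equally easily to $A_\xi$.

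The main obstacle is the bookkeeping around the pseudo-Riemannian submersion: one has to keep careful track of which vectors in $\Cc$ are horizontal (both at the tangent and normal levels), to verify that the shape operator of $\V$ preserves the horizontal distribution so that it descends to a tangential endomorphism of $\Q$, and to handle the negative-definite normal bundle consistently (so that \emph{unit normal} here means $\langle\xi,\xi\rangle=-1$). Once the concrete formula $AW=-(d\pi)_z\bar w$ is in place, the three properties follow from a few lines of direct computation.
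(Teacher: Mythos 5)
Your proof is correct, and it is essentially the computation that underlies the paper: the lemma itself is only quoted from Montiel--Romero, but the explicit formula you derive, $AW=-(d\pi)_z\bar w$ for the normal $(d\pi)_z\bar z$, is exactly the one the paper records later in Choice~\ref{ex:choiceA}, and the three properties plus the reduction of a general unit normal to $\cos\theta\,\tilde N+\sin\theta\,J\tilde N$ (using that the normal bundle is negative definite of rank $2$) follow just as you say. The only points needing the care you already flag are that $\bar w$ is again horizontal at $z$ (so the shape operator of $\V$ in $\H(-1)$ preserves the horizontal distribution and descends through the pseudo-Riemannian submersion) and that the normal field $[z]\mapsto(d\pi)_z\bar z$ requires a local section of the fibration, which is why the resulting almost product structures are only locally defined.
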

If $A \in \mathcal A$ is the shape operator associated to a unit normal vector field $\zeta$ along $\Q$ in $\CH(-4)$, the formula of Weingarten reads
\begin{equation*}
\nabla^{\CH(-4)}_X\zeta= -AX +s(X)J\zeta
\end{equation*}
for any tangent vector $X$ to $\Q$, where $\nabla^{\CH(-4)}$ is the Levi Civita connection of $\CH(-4)$ and $s$ is a one-form on $\Q$. Note that this one-form depends on the choice of $\zeta$ and hence on the choice of almost product structure $A$. Using the same reasoning as in \cite{MontielRomero,smyth}, we obtain 
\begin{equation*}
\nabla^{\Q}_X A= s(X) JA \label{eq:CovariantDerivativeOfA}
\end{equation*}
for any tangent vector $X$ to $\Q$, where $\nabla^{\Q}$ denotes the Levi-Civita connection on $\Q$.  
	
The equation of Gauss for $\Q$ as a submanifold of $\CH(-4)$ yields the following expression for the Riemann-Christoffel curvature tensor of $\Q$:
	\begin{equation}\label{eq:RiemCurQn*}
	\begin{aligned}
	R^{\Q}(X,Y)Z = -g&(Y,Z)X +g(X,Z)Y-g(X,JZ)JY +g(Y,JZ)JX - 2g(X,JY)JZ \\
	&- g(AY,Z)AX + g(AX,Z)AY - g(JAY,Z) JAX + g(JAX,Z) JAY,
	\end{aligned}
	\end{equation}
where $A$ is any element of $\mathcal{A}$. In particular, it is an almost product structure on $\Q$ which anti-commutes with $J$ by Lemma \ref{lem:PropertiesOfA}. Notice that the Riemann-Christoffel curvature tensor of $\Q$ is exactly opposite to the Riemann-Christoffel curvature tensor of $Q^n$, which explains why Lagrangian submanifolds of these two spaces can be studied by similar methods, as we will see in the next section.

%%%%%%%%%%%%%%%%%%%%%%%%%%%%%%%%%%%%%%%%%%%%%%%%%%%%%%%
\section{Lagrangian submanifolds of $\Q$} \label{sec:2}
%%%%%%%%%%%%%%%%%%%%%%%%%%%%%%%%%%%%%%%%%%%%%%%%%%%%%%%
	
Consider an immersion $f:M^n\rightarrow \Q$ from a manifold of real dimension $n$ into the complex hyperbolic quadric of complex dimension $n$. If no confusion is possible, we will identify $(df)_p(T_pM^n)$ with $T_pM^n$ for every $p\in M^n$. Moreover, we will denote the metric on $M^n$, induced from the metric $g$ on $\Q$, again by $g$. As usual in complex geometry, we say that $f$ is \textit{Lagrangian} if $J$ maps the tangent space to $M^n$ at a point into the normal space to $M^n$ at that point and vice versa.
	
Fixing an almost product structure $A\in \mathcal{A}$ on $\Q$, we can define, at any point $p$ of a Lagrangian submanifold $M^n$ of $\Q$, two endomorphisms $B$ and $C$ of $T_pM^n$ by putting 
\begin{equation} 
AX=BX-JCX   \label{eq:DefinitionBandC}
\end{equation}
for all $X\in T_pM^n$, i.e., $BX$ is the component of $AX$ tangent to $M^n$ and $CX$ is the image under $J$ of the component of $AX$ normal to $M^n$. The same construction was done in \cite{LMVdVVW} for $Q^n$ and, in a similar way, the following result easily follows from Lemma \ref{lem:PropertiesOfA}.
	
\begin{lemma}\label{lem:DefinitionBandC}
Let $p$ be a point of a Lagrangian submanifold $M^n$ of $\Q$. The endomorphisms $B$ and $C$ of $T_pM^n$, defined by \eqref{eq:DefinitionBandC}, are symmetric, they commute and they satisfy $B^2+C^2=\operatorname{id}_{T_pM^n}$.
\end{lemma}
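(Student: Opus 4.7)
The plan is to use the three defining properties of $A$ from Lemma~\ref{lem:PropertiesOfA} (symmetric, involutive, anti-commutes with $J$) together with the Lagrangian condition, which says that $J$ interchanges $T_pM^n$ and $T_p^{\perp}M^n$ isomorphically. The existence of $B$ and $C$ as well-defined endomorphisms of $T_pM^n$ is then immediate: decompose $AX \in T_p\Q$ uniquely as tangent plus normal, call the tangent part $BX$, and write the normal part as $-JCX$ using the isomorphism $J : T_pM^n \to T_p^{\perp}M^n$.

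Symmetry of $B$ comes almost for free: for $X,Y \in T_pM^n$, since $JCX$ is normal and $Y$ tangent we have $g(BX,Y) = g(AX,Y)$, and then the symmetry of $A$ and the same argument for $Y$ give $g(AX,Y) = g(X,AY) = g(X,BY)$. For symmetry of $C$, I would rewrite the defining relation as $JCX = BX - AX$ and use the K\"ahler identity $g(CX,Y) = g(JCX,JY)$, then substitute, discard the term $g(BX,JY) = 0$ (Lagrangian), and apply successively the symmetry of $A$ and the relation $AJ = -JA$ to get $g(CX,Y) = g(X,JAY)$; expanding $JAY = JBY + CY$ via $J^2 = -\id$ and dropping $g(X,JBY) = 0$ yields $g(CX,Y) = g(X,CY)$.

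The last two assertions I would extract simultaneously by computing $A^2X$ in two ways. On the one hand $A^2 = \id$; on the other,
\[
A^2X = A(BX) - A(JCX) = A(BX) + J(ACX),
\]
using $AJ = -JA$, and then expanding $A(BX) = B^2X - JCBX$ and $J(ACX) = JBCX + C^2X$ gives
\[
A^2 X = (B^2 + C^2)X + J(BC - CB)X.
\]
The tangent/normal decomposition (note $(BC-CB)X$ is tangent, so $J(BC-CB)X$ is normal) forces the tangent part to equal $X$ and the normal part to vanish, which is exactly $B^2 + C^2 = \id_{T_pM^n}$ and $BC = CB$.

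No step looks hard; the one place to be careful is the sign bookkeeping in the symmetry proof for $C$, since it is the only argument that uses all three properties of $A$ (symmetric, anti-commuting with $J$, and the K\"ahler compatibility of $g$ and $J$) at once. Everything else is a direct consequence of the decomposition $AX = BX - JCX$ and the fact that $B$ and $C$ preserve $T_pM^n$.
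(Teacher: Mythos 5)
Your proof is correct, and it is essentially the argument the paper has in mind: the paper omits the proof, deferring to the analogous computation for $Q^n$ in the cited reference, and your direct verification (symmetry of $B$ and $C$ from the symmetry of $A$, the Lagrangian condition and the compatibility of $g$ with $J$; then reading off $B^2+C^2=\operatorname{id}$ and $BC=CB$ from the tangent and normal parts of $A^2=\operatorname{id}$) is exactly that computation. The sign bookkeeping you flag is handled correctly throughout.
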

%
%	\begin{proof}
%		Since $g(BX,Y)=g(AX,Y)$ and $g(CX,Y)=g(JAX,Y)$ for all $X,Y\in T_pM^n$, the endomorphisms $B$ and $C$ are symmetric.
%		
%		Furthermore, we have $X=A^2X=A(BX-JCX)=(B^2+C^2)X+J(BC-CB)X$ for an arbitrary vector $X\in T_pM^n$. Since the first term on the right hand side is tangent to $M^n$ and the secondterm on the right hand side is normal to $M^n$, we get $(B^2+C^2)X=X$ and $(BC-CB)X=0$, which proves the result.
%	\end{proof}
%
Since $\Q$ is Riemannian, it follows that $B$ and $C$ are simultaneously diagonalizable and the sum of the squares of corresponding eigenvalues is equal to 1. Thus, there exists an orthonormal basis $\{e_1,\dots,e_n\}$ of $T_pM^n$ and real numbers $\theta_1,\dots,\theta_n$, defined up to an integer multiple of $\pi$, such that $B e_j = \cos(2\theta_j) e_j$ and $Ce_j = \sin(2\theta_j) e_j$ for $j \in \{1,\dots,n\}$. The factor $2$ is just a choice and we can write both equalities together as
\begin{equation}\label{eq:defBandC}
Ae_j = \cos(2\theta_j)e_j-\sin(2\theta_j)Je_j.
\end{equation}

Working locally, we can look at $B$ and $C$ as symmetric $(1,1)$-tensor fields on $M^n$ which define a local orthonormal frame $\{e_1,\dots,e_n\}$ and local angle functions $\theta_1,\dots,\theta_n$. 
	
%	\begin{lemma}\label{lem:RelationAtoA0}
%		Let $f:M^n \rightarrow \Q$ be a Lagrangian immersion and $A_0,A\in\mathcal{A}$. Then there exists a function $\alpha:M^n\rightarrow \mathbb{R}$ such that, along the image of $f$,
%		\[A=\cos(\varphi)A_0 + \sin(\varphi) JA_0.\]
%		If $\{e_1,\dots,e_n\}$ is a local orthonormal frame such that $A_0e_j=\cos(2\theta^0_j)e_j-\sin(2\theta^0_j)Je_j$ for $j=1,\dots,n$, then $Ae_j=\cos(2\theta_j)e_j-\sin(2\theta_j)Je_j$ for $j=1,\dots,n$, with
%		\[\theta_j=\theta_j^0 -\varphi/2.\]
%	\end{lemma}
%	\begin{proof}
%		Assume that $A_0$ and $A$ are the shape operators associated to the normal vector fields $\zeta_0$ and $\zeta$ respectively. Since $\Q$ is a K\"ahler submanifold of $\mathbb{C}H^{n+1}_1(-4)$, there is a function $\varphi:M^n\rightarrow \mathbb{R}$ such that, at every point of $M^n$, $\zeta=\cos(\varphi)\zeta_0 +\sin(\varphi) J\zeta_0$, which implies that $A=\cos(\varphi)A_0+\sin(\varphi)JA_0$.  
%		Now assume that $A_0e_j=\cos(2\theta_j^0)e_j -\sin(2\theta_j^0)Je_j. $ Then it follows from a straight forward computation that $Ae_j=\cos(2\theta_j^0 -\varphi) e_j-\sin(2\theta_j^0-\varphi)Je_j. $
%	\end{proof}
	
There are different choices for the almost product structure $A\in\mathcal{A}$ on $\Q$. The following lemma, whose counterpart for $Q^n$ was proven in \cite{LMVdVVW}, shows how a change of the almost product structure changes the local angle functions of a Lagrangian submanifold of $\Q$.
\begin{lemma}\label{lem:changeA}
Let $f : M^n \to \Q$ be a Lagrangian immersion and $A_0,A\in\mathcal{A}$. Then there exists a function $\phi: M^n \to \mathbb{R}$ such that, along the image of $f$,
$$ A = \cos\phi \, A_0 + \sin\phi \, JA_0. $$ 
If $\{e_1,\dots, e_n\}$ is a local orthonormal frame with $A_0e_j = \cos(2\theta^0_j)e_j - \sin(2\theta^0_j) Je_j$ for $j\in\{1,\dots, n\}$, then $Ae_j = \cos(2\theta_j)e_j - \sin(2\theta_j)Je_j$ for $j\in\{1,\dots, n\}$, with
$$ \theta_j =\theta^0_j-\frac{\phi}{2}. $$
\end{lemma}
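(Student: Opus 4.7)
The plan is to exploit the fact that the normal bundle of $\Q$ inside $\CH(-4)$ is a real rank two bundle with positive definite fiber metric: locally its unit circle bundle is parameterized by one angle, so writing both almost product structures as shape operators of unit normal fields should yield a relation between $A$ and $A_0$ controlled by a single function $\phi$.

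First I would recall from Section~\ref{sec:1} that the normal space $T_{[z]}^\perp \Q$ is spanned by the orthogonal unit vectors $(d\pi)_z(\bar z)$ and $J(d\pi)_z(\bar z)$. Hence, if $\zeta_0$ is a local unit normal vector field corresponding to $A_0$, then $\{\zeta_0, J\zeta_0\}$ is an orthonormal frame of the normal bundle (using that $J$ is an isometry and that $g(X,JX)=0$ in a K\"ahler manifold). Any unit normal $\zeta$ associated to another $A\in\mathcal{A}$ can therefore be written uniquely as $\zeta = \cos\phi\,\zeta_0 + \sin\phi\,J\zeta_0$ for some local function $\phi$ on $\Q$, which restricts to a function $\phi:M^n\to\R$ along the image of $f$. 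By linearity of the shape operator in the normal direction, $A = \cos\phi\,A_0 + \sin\phi\,A'$, where $A'$ denotes the shape operator associated to $J\zeta_0$.

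To identify $A'$ I would differentiate $J\zeta_0$, use the K\"ahler condition $\nabla^{\CH(-4)}J=0$, and substitute the Weingarten formula stated after Lemma~\ref{lem:PropertiesOfA} to obtain
$$\nabla^{\CH(-4)}_X(J\zeta_0) \;=\; J\bigl(-A_0 X + s(X)J\zeta_0\bigr) \;=\; -JA_0 X - s(X)\zeta_0.$$
Since $\Q$ is a complex submanifold of $\CH(-4)$ and $A_0 X$ is tangent, $JA_0 X$ is tangent and $s(X)\zeta_0$ is normal; hence $A' X = -(\nabla^{\CH(-4)}_X (J\zeta_0))^T = JA_0 X$. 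This proves the first claim $A = \cos\phi\,A_0 + \sin\phi\,JA_0$. For the second statement I would substitute $A_0 e_j = \cos(2\theta^0_j) e_j - \sin(2\theta^0_j) Je_j$ into this formula, apply $J^2 = -\mathrm{id}$, and use the addition formulas; the expression collapses to $A e_j = \cos(2\theta^0_j - \phi) e_j - \sin(2\theta^0_j - \phi) Je_j$, from which one reads off $\theta_j = \theta^0_j - \phi/2$.

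The only genuinely non-routine step is the identification $A' = JA_0$: it rests on the interplay between the K\"ahler parallelism of $J$, the Weingarten formula for $\zeta_0$, and the $J$-invariance of $\Q$. Once this is in hand, the rest of the lemma reduces to a rotation argument in the normal plane and a trigonometric identity.
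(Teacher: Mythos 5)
Your proof is correct and follows essentially the standard argument: the paper omits the proof (citing the $Q^n$ counterpart), but rotating the unit normal by an angle $\phi$ in the rank-two definite normal bundle, using linearity of the shape operator, identifying $A_{J\zeta_0}=JA_0$ via $\nabla^{\mathbb{C}H^{n+1}_1(-4)}J=0$ together with the Weingarten formula, and then collapsing the trigonometric expressions is exactly the intended route. One minor correction: the fiber metric on the normal bundle of $\Q$ in $\mathbb{C}H^{n+1}_1(-4)$ is \emph{negative} definite rather than positive definite (the paper records $\langle\!\langle \bar z,\bar z\rangle\!\rangle_2=\langle\!\langle i\bar z,i\bar z\rangle\!\rangle_2=-1$), but this does not affect your argument, since the set of unit normals at a point is still a circle parameterized by a single angle and the Weingarten identities you use are unchanged.
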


If $h$ is the second fundamental form of the Lagrangian immersion $f:M^n\rightarrow \Q$, we define 
\begin{equation*}
h_{ij}^k = g(h(e_i,e_j),J e_k)
\end{equation*}
for all $i,j,k \in \{1,\dots,n\}$, to be the components of $h$. A fundamental property of Lagrangian submanifolds implies that the components $h_{ij}^k$ are symmetric in the three indices. Furthermore, if $\nabla$ is the induced connection on $M^n$ from the Levi Civita connection $\nabla^{\Q}$, we define its connection one-forms by
\begin{equation*}
\omega_{j}^k(X) = g(\nabla_{X} e_j, e_k)
\end{equation*}
for all $j,k \in \{1,\dots,n\}$ and $X$ tangent to $M^n$. Remark that these one-forms are anti-symmetric in their indices.
	
\begin{proposition} \label{prop:verband_theta_h}
Let $M^n$ be a Lagrangian submanifold of $\Q$ and let $A\in\mathcal{A}$ be an almost product structure on $\Q$. Let $\{e_1,\dots,e_n\}$ be a local orthonormal frame on $M^n$ as constructed above, then the following relations between the angle functions, the components of the second fundamental form and the connection forms hold:
\begin{align}
& e_i(\theta_j)= h_{jj}^i-\frac{s(e_i)}{2},\label{eq:DerivativeOfTheta}\\
& \sin(\theta_j -\theta_k)\omega_{j}^k(e_i) = \cos(\theta_j-\theta_k)h_{ij}^k \label{eq:relation_h_omega}
\end{align}
for all $i,j,k \in \{1,\dots,n\}$ with $j\neq k$.
\end{proposition}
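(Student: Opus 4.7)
The plan is to differentiate \eqref{eq:defBandC} covariantly in the direction $e_i$ and expand both sides in the basis $\{e_1,\dots,e_n,Je_1,\dots,Je_n\}$ of $T_{f(p)}\Q$ along $M^n$. On the left-hand side I would apply the product rule and use the formula $\nabla^{\Q}_X A = s(X)JA$ (recorded just before \eqref{eq:RiemCurQn*}) to write
$$\nabla^{\Q}_{e_i}(Ae_j) = s(e_i)\,JAe_j + A\nabla^{\Q}_{e_i}e_j.$$
The Lagrangian hypothesis makes $\{Je_1,\dots,Je_n\}$ an orthonormal frame of the normal bundle, so the Gauss formula reads $\nabla^{\Q}_{e_i}e_j = \sum_k \omega_j^k(e_i)\,e_k + \sum_k h_{ij}^k\,Je_k$. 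Applying $A$ termwise via $Ae_k=\cos(2\theta_k)e_k-\sin(2\theta_k)Je_k$ together with $AJe_k=-JAe_k$ (Lemma~\ref{lem:PropertiesOfA}) then puts $A\nabla^{\Q}_{e_i}e_j$ into the chosen basis. On the right-hand side, the product rule and the K\"ahler identity $\nabla^{\Q}J=0$ yield
$$\nabla^{\Q}_{e_i}(\cos(2\theta_j)e_j-\sin(2\theta_j)Je_j)=-2e_i(\theta_j)[\sin(2\theta_j)e_j+\cos(2\theta_j)Je_j]+\cos(2\theta_j)\nabla^{\Q}_{e_i}e_j-\sin(2\theta_j)J\nabla^{\Q}_{e_i}e_j,$$
into which the same decomposition is substituted.

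Comparing the coefficients of $e_j$, and using $\omega_j^j(e_i)=0$, gives $\sin(2\theta_j)[s(e_i)-h_{ij}^j]=\sin(2\theta_j)[-2e_i(\theta_j)+h_{ij}^j]$. Dividing by $\sin(2\theta_j)$ and invoking the total symmetry $h_{ij}^j=h_{jj}^i$ produces \eqref{eq:DerivativeOfTheta}; if $\sin(2\theta_j)$ happens to vanish then $\cos(2\theta_j)\neq 0$ and the $Je_j$-coefficient yields the same identity after dividing instead by $\cos(2\theta_j)$. For $k\neq j$, comparing the coefficients of $e_k$ gives
$$\omega_j^k(e_i)[\cos(2\theta_k)-\cos(2\theta_j)] = h_{ij}^k[\sin(2\theta_j)+\sin(2\theta_k)],$$
and the sum-to-product identities $\cos(2\theta_k)-\cos(2\theta_j)=2\sin(\theta_j+\theta_k)\sin(\theta_j-\theta_k)$ and $\sin(2\theta_j)+\sin(2\theta_k)=2\sin(\theta_j+\theta_k)\cos(\theta_j-\theta_k)$ reduce it to \eqref{eq:relation_h_omega} once the common factor $2\sin(\theta_j+\theta_k)$ is cancelled. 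If $\sin(\theta_j+\theta_k)=0$, then the $Je_k$-coefficient provides the parallel identity with $\cos(\theta_j+\theta_k)$ as the common factor; since $\sin^2+\cos^2=1$, at least one of the two routes is always available.

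The main bookkeeping obstacle is tracking signs and terms through the repeated use of $AJ=-JA$ and $J^2=-\id$ when re-expressing $A\nabla^{\Q}_{e_i}e_j$ in the frame $\{e_\ell,Je_\ell\}$; once this is done carefully, both identities of the proposition drop out as a direct comparison of coefficients together with the two trigonometric simplifications above.
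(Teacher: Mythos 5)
Your computation is correct and is exactly the argument the paper has in mind: it gives no proof here but defers to the analogous result for $Q^n$ in the cited reference, which likewise differentiates $Ae_j=\cos(2\theta_j)e_j-\sin(2\theta_j)Je_j$ using $\nabla^{\Q}_XA=s(X)JA$, the Gauss formula and $\nabla^{\Q}J=0$, and compares components in the frame $\{e_\ell,Je_\ell\}$. Your coefficient identities and the sum-to-product reductions (including the fallback to the $Je_j$- and $Je_k$-components when $\sin(2\theta_j)$ or $\sin(\theta_j+\theta_k)$ vanishes) all check out.
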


The proof is similar to that of the corresponding result for $Q^n$, which can be found in \cite{LMVdVVW}. The following result easily follows from \eqref{eq:DerivativeOfTheta}.
	
\begin{corollary}\label{cor:MinimalLagrangianImmersion}
Let $f:M^n\rightarrow \Q$ be a minimal Lagrangian immersion for which the sum of the local angle functions is constant. 
%This can for example be achieved by choosing $A\in \mathcal{A}$ such that the sum of the angle functions vanishes modulo $\pi$. 
Then the one-form $s$  associated to $A$ vanishes on tangent vectors to $M^n$. 
%In particular, for all $X$ tangent to $M$, one has $\nabla^{\Q}_X A=0$ and, if $A$ is the shape-operator associated to a normal vector field $\zeta$ along $\Q$, also $\nabla^{\perp}_X \zeta=0$, where $\nabla^{\perp}$ is the normal connection of $\Q$ in $\mathbb{C}H^{n+1}_1(-4)$.
\end{corollary}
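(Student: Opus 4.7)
The plan is to derive the conclusion directly by summing relation \eqref{eq:DerivativeOfTheta} over the index $j$. More precisely, for each fixed $i \in \{1,\dots,n\}$ I would write
\begin{equation*}
\sum_{j=1}^{n} e_i(\theta_j) \;=\; \sum_{j=1}^{n} h_{jj}^i \;-\; \frac{n}{2}\, s(e_i).
\end{equation*}
By interchanging the sum with the derivation (valid since the chosen local smooth branches of $\theta_j$ are differentiable), the left-hand side equals $e_i\!\left(\sum_{j=1}^{n}\theta_j\right)$, which vanishes by hypothesis.

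Next I would unpack the minimality assumption. Since $\{Je_1,\dots,Je_n\}$ is a local orthonormal frame of the normal bundle of a Lagrangian immersion into a K\"ahler manifold, the mean curvature vector $H=\frac{1}{n}\sum_{j}h(e_j,e_j)$ satisfies $g(H,Je_i)=\frac{1}{n}\sum_{j}h_{jj}^i$. Thus the vanishing of $H$ is equivalent to $\sum_{j=1}^n h_{jj}^i=0$ for every $i$. Plugging this into the summed identity leaves $0 = -\frac{n}{2}\, s(e_i)$, so $s(e_i)=0$ for each $i$, and the linearity of $s$ then yields $s(X)=0$ for every tangent vector $X$ to $M^n$.

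There is really no obstacle: the corollary is an immediate consequence of \eqref{eq:DerivativeOfTheta} together with the standard reformulation of minimality in terms of the components $h_{jj}^i$. The only mild care needed is that the angle functions are only defined modulo $\pi$, but this ambiguity disappears under differentiation, so the statement $e_i\!\left(\sum_j \theta_j\right)=0$ makes unambiguous sense on any local smooth choice of the $\theta_j$.
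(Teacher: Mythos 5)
Your proposal is correct and follows exactly the paper's own argument: sum \eqref{eq:DerivativeOfTheta} over $j$, use constancy of $\theta_1+\cdots+\theta_n$ to kill the left-hand side and minimality to kill $\sum_j h_{jj}^i$, leaving $s(e_i)=0$. The extra remarks about the mean curvature vector and the mod-$\pi$ ambiguity are fine but do not change the route.
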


\begin{proof}
We choose $\{e_1,\ldots,e_n\}$ and $\theta_1,\ldots,\theta_n$ as above. Since the sum of the local angle functions is constant, \eqref{eq:DerivativeOfTheta} implies
$$ 0 = e_i(\theta_1+\ldots+\theta_n) = h_{11}^i + \ldots + h_{nn}^i - n \frac{s(e_i)}{2} = - n \frac{s(e_i)}{2}, $$
where we used minimality in the last equality. We conclude that $s(e_i)=0$ for all $i \in \{1,\ldots,n\}$, which means that $s$ vanishes on all tangent vectors to $M^n$.
\end{proof}
	
We will now state the equations of Gauss and Codazzi for a Lagrangian submanifold of $\Q$.
	
\begin{proposition}[Equations of Gauss and Codazzi]
Let $f:M^n\rightarrow \Q$ be a Lagrangian immersion with second fundamental form $h$. Define $B$ and $C$ as above for any choice of $A\in \mathcal{A}$. Finally, denote by $R$ the Riemann-Christoffel curvature tensor of $M^n$ and by $\bar{\nabla}$ the connection of Van der Waerden-Bortolotti. Then
\begin{equation}\label{eq:GaussEqLagSubm}
\begin{aligned}g(R(X,Y)Z,W) = -&g(Y,Z)g(X,W) + g(X,Z)g(Y,W) \\
& -g(BY,Z)g(BX,W) + g(BX,Z)g(BY,W)\\
& -g(CY,Z)g(CX,W) + g(CX,Z)g(CY,W)\\
& + g(h(Y,Z),h(X,W)) - g(h(X,Z), h(Y,W))
\end{aligned}
\end{equation}
and
\begin{equation}\label{eq:CodazziEqLagSubm}
\begin{aligned}
(\bar{\nabla}h)(X,Y,Z)-(\bar{\nabla}h) (Y,X,Z) = g&(BY,Z)JCX -g(BX,Z)JCY\\
&-g(CY,Z)JBX +g(CX,Z) JBY
\end{aligned}
\end{equation}
for any vector fields $X,Y,Z$ and $W$ tangent to $M^n$. 
\end{proposition}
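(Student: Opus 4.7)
The plan is to derive both equations directly from the general Gauss and Codazzi equations for the submanifold $M^n\subset \Q$, using the explicit curvature formula \eqref{eq:RiemCurQn*} and the decomposition \eqref{eq:DefinitionBandC}. The key observation is that, since $f$ is Lagrangian, $JX$ is normal to $M^n$ for every tangent $X$, and moreover the tangential/normal splits of $A$ and $JA$ are given by
$$ AX = \underbrace{BX}_{\text{tangent}} + \underbrace{(-JCX)}_{\text{normal}}, \qquad JAX = \underbrace{CX}_{\text{tangent}} + \underbrace{JBX}_{\text{normal}}, $$
using $J^2=-\operatorname{id}$ and the fact that $J$ interchanges tangent and normal spaces.

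For the Gauss equation, I would start from
$$ g(R(X,Y)Z,W) = g\!\left(R^{\Q}(X,Y)Z,W\right) + g(h(Y,Z),h(X,W)) - g(h(X,Z),h(Y,W)) $$
for tangent vectors $X,Y,Z,W$, and simplify the first term on the right using \eqref{eq:RiemCurQn*}. The three terms involving $g(\cdot,J\cdot)$ between tangent vectors vanish by the Lagrangian condition. In the four remaining terms involving $A$ and $JA$, pairings like $g(AY,Z)$ and $g(JAY,Z)$ with tangent $Z$ pick up only the tangential part of $AY$ (resp.\ $JAY$), giving $g(BY,Z)$ (resp.\ $g(CY,Z)$); the same reduction applies to the second factor, yielding \eqref{eq:GaussEqLagSubm}.

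For the Codazzi equation, I would start from
$$ (\bar\nabla h)(X,Y,Z) - (\bar\nabla h)(Y,X,Z) = \bigl(R^{\Q}(X,Y)Z\bigr)^{\perp} $$
and compute the normal component of the right-hand side of \eqref{eq:RiemCurQn*} with tangent inputs $X,Y,Z$. The first line of \eqref{eq:RiemCurQn*} is purely tangent and contributes nothing, while the $J$-terms vanish for the same inner-product reason as before. For the $A$- and $JA$-terms, I extract only the normal parts of $AX$, $AY$, $JAX$, $JAY$, namely $-JCX,-JCY,JBX,JBY$, while the scalar coefficients $g(AY,Z)=g(BY,Z)$, $g(AX,Z)=g(BX,Z)$, $g(JAY,Z)=g(CY,Z)$, $g(JAX,Z)=g(CX,Z)$ are computed as above. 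Combining signs gives exactly the right-hand side of \eqref{eq:CodazziEqLagSubm}.

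There is no real obstacle here: the argument is essentially a bookkeeping exercise that tracks the tangent and normal components of $A$ and $JA$ relative to $M^n$. The only point requiring mild care is keeping the signs correct when passing from $AX=BX-JCX$ to the normal component of $JAX$ via $J(-JCX)=CX$ and $J(BX)=JBX$, so that the tangential part of $JA$ comes out as $+C$ rather than $-C$; everything else is direct substitution into the two standard equations.
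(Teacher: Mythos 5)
Your proposal is correct and follows exactly the paper's own (very brief) proof: substitute the curvature formula \eqref{eq:RiemCurQn*} into the general Gauss and Codazzi equations, kill the $J$-terms by the Lagrangian condition, and split $AX=BX-JCX$ and $JAX=CX+JBX$ into tangential and normal parts. The sign bookkeeping you flag at the end is handled correctly.
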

\begin{proof}
These follow immediately from the general forms of the equations of Gauss and Codazzi,
\begin{align*}
& g(R(X,Y)Z,W) = g(R^{\Q}(X,Y)Z,W) + g(h(Y,Z),h(X,W)) - g(h(X,Z), h(Y,W)),\\
& (\bar{\nabla}h)(X,Y,Z) - (\bar{\nabla}h)(Y,X,Z) = (R^{\Q}(X,Y)Z)^{\perp},
\end{align*}
where the superscript $\perp$ denotes the component normal to $M^n$, by using (\ref{eq:RiemCurQn*}) and (\ref{eq:DefinitionBandC}). 
\end{proof}

\begin{remark}
Note that the Ricci equation for Lagrangian submanifolds of $\Q$ is equivalent to the Gauss equation. 
\end{remark}
%	\begin{remark}
%		If $\{e_1,\dots,e_n\}$ is the orthonormal frame constructed above and $\theta_1,\dots,\theta_n$ are the angle functions, then it follows from (\ref{eq:GaussEqLagSubm}) and (\ref{eq:DerivativeOfTheta}) that the sectional curvature of the plane spanned by $e_i$ and $e_j$ is given by
%		\begin{equation}\label{eq:SecCurvLagSubm}
%		\begin{aligned}
%		K_{ij} & = g(R(e_i,e_j) e_j,e_i)\\
%		&= -2 \cos^2(\theta_i-\theta_j) + g(h(e_i,e_i),h(e_j,e_j)) - g(h(e_i,e_j),h(e_i,e_j)) \\
%		& =-2\cos^2(\theta_i-\theta_j)+\sum_{k=1}^n \left(h_{jj}^k h_{ii}^k -(h_{ij}^k)^2 \right)\\
%		& = -2\cos^2(\theta_i-\theta_j) +\sum_{k=1}^n\left( \left(\frac{s(e_k)}{2} + e_k(\theta_j)\right)\left(\frac{s(e_k)}{2} + e_k(\theta_i)\right)-(h_{ij}^k)^2 \right)
%		\end{aligned}
%		\end{equation}
%		for any $i,j=1, \dots, n$, with $i\neq j$.
%	\end{remark}

We finish this section by giving two specific choices of an almost product structure $A \in \mathcal A$, adapted to a given Lagrangian submanifold of $\Q$.

\begin{choice}[Choice of $A$ along a Lagrangian submanifold of $\Q$ such that the sum of the angle functions vanishes] \label{ex:choiceA2}
Given a Lagrangian immersion $f : M^n \to \Q$, one can choose $A \in\mathcal{A}$ such that
the associated local angle functions satisfy
\begin{equation}
\theta_1 +\cdots + \theta_n = 0 \mod \pi. \label{eq:sumtheta0}
\end{equation}
Indeed, let $A_0 \in \mathcal{A}$ be an arbitrary almost product structure with associated local angle functions $\theta^0_1, \dots,\theta^0_n$ and put $\phi = 2(\theta^0_1+\cdots+\theta^0_n)/n$. If we choose $A\in\mathcal{A}$ such that $A = \cos\phi \, A_0 + \sin\phi \, JA_0$ along the image of $f$, then it follows from Lemma \ref{lem:changeA} that the local angle functions associated to $A$ satisfy \eqref{eq:sumtheta0}. Note that this implies that the condition of Corollary \ref{cor:MinimalLagrangianImmersion} can always be met by a suitable choice of $A$.
\end{choice}

\begin{choice}[Choice of $A$ along a Lagrangian submanifold of $\Q$ with a given horizontal lift] \label{ex:choiceA}
Assume that both a Lagrangian immersion $f: M^n \to \Q$ and a horizontal lift $\tilde f:~M^n \to \V$ of $f$ are given. It follows from \cite{Reckziegel1985} that any Lagrangian immersion into $\Q$ locally allows such a horizontal lift. If $M^n$ is simply connected, the horizontal lift can be defined globally. Since the normal space to $\V$ in $H^{2n+3}_3(-1) \subset \C^{n+2}_2$ at a point $z$ is the complex span of $\bar z$, one can take $\zeta$, defined by $\zeta_{f(p)}=(d\pi)_{\tilde f(p)}\left(\overline{\tilde{f}(p)}\right)$, as a unit normal vector field to $\Q$ in $\C H^{n+1}_1(-4)$ along the image of $f$. The corresponding shape operator is given by $
A X = -(d\pi)_{\tilde f(p)}\left(\overline{\tilde X}\right)$, 
where $X$ is any vector tangent to $\Q$ at a point $f(p)$ and $\tilde X$ is its horizontal lift to $\tilde f(p)$. In the special case that $v$ is tangent to $M^n$ at a point $p$, we have 
\begin{equation} \label{eq:choiceA}
A(df)_p v = -(d\pi)_{\tilde f(p)}\left(d\overline{\tilde f}\right)_p v. 
\end{equation}
This $A$ can be extended to an element of $\mathcal A$, defined in a neighborhood of $f(M^n)$.
\end{choice}
	
%%%%%%%%%%%%%%%%%%%%%%%%%%%%%%%%%%%%%%%%%%%%%%%%%%%%%%%%%%%%%%%%%%%%%%%%%%%%%%%%%%%%%%%
\section{A Gauss map for spacelike hypersurfaces of anti-de Sitter space} \label{sec:3}
%%%%%%%%%%%%%%%%%%%%%%%%%%%%%%%%%%%%%%%%%%%%%%%%%%%%%%%%%%%%%%%%%%%%%%%%%%%%%%%%%%%%%%%

\subsection{Definition and properties}

We define a notion of Gauss map for spacelike hypersurfaces of the anti-de Sitter space $H^{n+1}_1(-1)$. The following definition is inspired by the definition of Gauss map of a hypersurface of a sphere, which was given for instance in \cite{Palmer1997}.

\begin{definition}
Let $a:M^n\rightarrow H^{n+1}_1(-1) \subset \mathbb{R}^{n+2}_2$ be a spacelike immersion and denote by $b$ a unit normal vector field along this immersion, tangent to $H^{n+1}_1(-1)$. We define the Gauss map
of the hypersurface $a$ by
\begin{equation*}
G:M^n \rightarrow \Q: p\mapsto [a(p)+ib(p)].
\end{equation*}
\end{definition}	
	
Note that $G$ indeed takes values in the complex hyperbolic quadric: for every $p \in M^n$, we have that $\langle a(p),a(p) \rangle_2 = \langle b(p),b(p) \rangle_2 = -1$ and $\langle a(p),b(p) \rangle_2 = 0$, such that $\frac{1}{\sqrt{2}}(a(p)+ib(p)) \in \V$ by \eqref{eq:defV}. This implies that $[a(p)+ib(p)] = [\frac{1}{\sqrt{2}}(a(p)+ib(p))] = \pi(\frac{1}{\sqrt{2}}(a(p)+ib(p)) \in \Q$. 

In the following, we shall refer to
\begin{equation*}
\widetilde{G}: M^n\rightarrow \V: p \mapsto \frac{1}{\sqrt{2}}(a(p)+ib(p))
\end{equation*}
as the \textit{canonical lift} of the Gauss map of a hypersurface $a: M^n \to H_1^{n+1}(-1)$ with a fixed unit normal $b$. Using this lift, we can prove that the Gauss map $G$ is Lagrangian. 

\begin{lemma}\label{lem:lagrangian}
The Gauss map $G:M^n \rightarrow \Q$ of a spacelike hypersurface $a: M^n \to H^{n+1}_1(-1)$ of anti-de Sitter space is a Lagrangian immersion. 
\end{lemma}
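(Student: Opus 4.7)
The plan is to work with the canonical lift $\widetilde G$ rather than with $G$ itself, using that $\pi\colon \V \to \Q$ is a pseudo-Riemannian submersion whose vertical direction at $z$ is spanned by $iz$. Once I recognize that $\widetilde G$ is actually a horizontal lift of $G$, both the immersion property and the Lagrangian property of $G$ will reduce to short computations in the ambient space $\Cc$.

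The first step is to verify horizontality. Differentiating $\widetilde G = \tfrac{1}{\sqrt{2}}(a + ib)$ yields $d\widetilde G_p(v) = \tfrac{1}{\sqrt{2}}(da_p(v) + i\, db_p(v))$, and the vertical direction at $\widetilde G(p)$ is $i\widetilde G(p) = \tfrac{1}{\sqrt{2}}(-b(p) + i\,a(p))$. Under the identification $\Cc \cong \mathbb R^{2n+4}_4$, the inner product satisfies $\langle\!\langle u+iv, p+iq\rangle\!\rangle_2 = \langle u, p\rangle_2 + \langle v, q\rangle_2$, so horizontality reduces to the two identities $\langle da(v), b\rangle_2 = 0$ and $\langle db(v), a\rangle_2 = 0$. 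The first is immediate because $b$ is normal to $a(M^n)$ in $\AdS$; the second follows by differentiating $\langle b, a\rangle_2 = 0$ together with $\langle b, da(v)\rangle_2 = 0$. Immersivity then becomes automatic: if $dG_p(v) = 0$ then $d\widetilde G_p(v)$ is both horizontal and vertical, hence zero, which forces $da_p(v) = 0$ and therefore $v = 0$.

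For the Lagrangian property, I will use that $J$ on $\CH(-4)$ is induced from multiplication by $i$ on horizontal vectors, so that $g(dG(v), J\, dG(w)) = \langle\!\langle d\widetilde G(v), i\, d\widetilde G(w)\rangle\!\rangle_2$. A direct expansion gives
\[
\langle\!\langle d\widetilde G(v), i\, d\widetilde G(w)\rangle\!\rangle_2 = \tfrac{1}{2}\bigl(\langle db(v), da(w)\rangle_2 - \langle da(v), db(w)\rangle_2\bigr).
\]
Differentiating the identity $\langle da(w), b\rangle_2 = 0$ in the direction $v$, and vice versa, and using symmetry of the Hessian $d^2a(v,w) = d^2a(w,v)$ (equivalently the symmetry of the shape operator of $a$ via $db(v) = -da(Sv)$), one gets $\langle da(v), db(w)\rangle_2 = \langle da(w), db(v)\rangle_2$, so the above expression vanishes. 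Combined with the dimension count $\dim_{\R} M^n = n = \tfrac{1}{2}\dim_{\R} \Q$, this promotes the isotropic condition to the full Lagrangian condition $J\,T_pM^n = T_p^\perp M^n$.

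The main point of care is the first step: one has to juggle the pseudo-Hermitian structure on $\Cc$, the indefinite ambient metric on $\mathbb R^{n+2}_2$, and the various orthogonality relations among $a$, $b$, and their derivatives. Once $\widetilde G$ has been identified as a horizontal lift, everything else is elementary linear algebra together with the symmetry of the second derivative of $a$.
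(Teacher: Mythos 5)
Your proof is correct and follows essentially the same route as the paper: both arguments work with the canonical lift $\widetilde G=\tfrac{1}{\sqrt2}(a+ib)$ and reduce the Lagrangian condition to the vanishing of $\langle\!\langle d\widetilde G(v), i\,d\widetilde G(w)\rangle\!\rangle_2$, the paper by diagonalizing the shape operator so that $(d\widetilde G)e_j=\tfrac{1}{\sqrt2}(1-i\lambda_j)e_j$, you by invoking the symmetry of $\langle db(v),da(w)\rangle_2$ directly. Your treatment is somewhat more explicit about horizontality and the immersion property, which the paper relegates to a remark, but the underlying computation is the same.
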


\begin{proof}
Let $S$ be the shape operator of the immersion $a$ associated to the unit normal vector field $b$ that was used to construct the Gauss map. Denote by $\{e_1,\dots,e_n\}$ a local orthonormal frame of principal directions of $a$ and by $\lambda_1,\dots,\lambda_n$ the corresponding principal curvatures such that $Se_j=\lambda_j e_j$ for $j \in \{1,\dots n\}$. Then $G$ is a Lagrangian immersion since the canonical lift $\widetilde{G}$ satisfies
\begin{equation}
(d\widetilde{G})e_j = \frac{1}{\sqrt{2}} (1-i\lambda_j) e_j \label{eq:LiftOfBasisUnderGaussMap}
\end{equation}
for all $j \in \{1,\ldots,n\}$, which is perpendicular to $i(d\widetilde{G})e_k$ for all $k \in \{1,\dots,n\}$. 
\end{proof}
\begin{remark}
We can see from \eqref{eq:LiftOfBasisUnderGaussMap} that $(d\widetilde{G})e_j$ is orthogonal to $i\widetilde G$ for all $j=1,\dots,n$. This shows that $\widetilde G$ is horizontal, meaning that $\widetilde G$ is the unique horizontal lift of $G$, up to multiplication with a factor $e^{it}$ for some constant $t\in\mathbb{R}$.
\end{remark}

Furthermore, hypersurfaces of anti-de Sitter space which are parallel to each other have the same Gauss maps.
 
\begin{lemma}\label{lem:parallel}
Let $a:M^n\rightarrow H^{n+1}_1(-1)$ be an immersion and let $a':M^n\to H^{n+1}_1(-1)$ be parallel to $a$, with the same orientation as $a$. Then $a$ and $a'$ have the same Gauss maps.
\end{lemma}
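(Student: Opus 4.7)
The plan is to write down the parallel hypersurface $a'$ and its unit normal $b'$ explicitly as geodesic translates of $a$ and $b$ inside $H^{n+1}_1(-1)$, then observe that $a' + ib'$ is a nonzero complex scalar multiple of $a + ib$, so their projective classes agree.

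Because the immersion $a$ is spacelike and the ambient $H^{n+1}_1(-1)$ is Lorentzian, the unit normal $b$ to $a$ inside $H^{n+1}_1(-1)$ is timelike; in the ambient $\mathbb R^{n+2}_2$ this means $\langle b,b\rangle_2 = -1$. The geodesic of $H^{n+1}_1(-1)$ starting at $a(p)$ with initial velocity $b(p)$ is
\begin{equation*}
s \longmapsto \cos(s)\, a(p) + \sin(s)\, b(p),
\end{equation*}
as a direct check against $\langle \cdot,\cdot\rangle_2 = -1$ shows. Hence a parallel hypersurface to $a$ at signed normal distance $t$ is
\begin{equation*}
a'(p) = \cos(t)\, a(p) + \sin(t)\, b(p),
\end{equation*}
and the choice of the same orientation corresponds to taking the parallel transport of $b$ along this geodesic as the new unit normal, namely
\begin{equation*}
b'(p) = -\sin(t)\, a(p) + \cos(t)\, b(p).
\end{equation*}
A short computation, using $\langle a,a\rangle_2 = \langle b,b\rangle_2 = -1$ and $\langle a,b\rangle_2 = 0$ together with $\langle b, db\cdot v\rangle_2 = 0$ and $\langle a, db\cdot v\rangle_2 = -\langle b, da\cdot v\rangle_2 = 0$, confirms that $a'$ lands in $H^{n+1}_1(-1)$ and that $b'$ is a unit timelike vector field normal to $a'$ inside $H^{n+1}_1(-1)$.

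The core step is now a one-line calculation:
\begin{align*}
a'(p) + i\,b'(p) &= \bigl(\cos(t) - i\sin(t)\bigr) a(p) + \bigl(\sin(t) + i\cos(t)\bigr) b(p) \\
&= e^{-it}\bigl(a(p) + i\,b(p)\bigr).
\end{align*}
Since $e^{-it} \neq 0$, the complex lines generated by $a(p)+ib(p)$ and $a'(p)+ib'(p)$ coincide, so $G'(p) = [a'(p)+ib'(p)] = [a(p)+ib(p)] = G(p)$ for every $p \in M^n$. There is no serious obstacle: the only point deserving a line of comment is the identification of "parallel with the same orientation" with the exponential/parallel-transport pair written above, which is the standard meaning.
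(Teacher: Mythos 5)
Your proof is correct and follows the same route as the paper's: write $a' = (\cos t)\,a + (\sin t)\,b$ and $b' = -(\sin t)\,a + (\cos t)\,b$, then observe that $a' + ib' = e^{-it}(a+ib)$, so the projective classes coincide. The extra verifications you include (that $a'$ lies in $H^{n+1}_1(-1)$ and that $b'$ is the correctly oriented unit normal) are sound but not essential additions to the argument the paper gives.
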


\begin{proof}
Denote by $b$ a unit normal vector field to $a$ and by $b'$ a unit normal vector field to $a'$, both inducing the same orientation on $M^n$. Since $a'$ is parallel to $a$ we have that
\begin{align*}
&a'(p)= (\cos t) \, a(p)+ (\sin t) \, b(p),\\
&b'(p)= -(\sin t) \, a(p) + (\cos t) \, b(p)
\end{align*} 
for some $t\in\R$. We immediately see that 
$$[a'(p)+ib'(p)]=[(\cos t \!-\! i\sin t)a(p)+ (\sin t \!+\! i\cos t)b(p)]=[e^{-it}(a(p)+ib(p))] = [a(p)+ib(p)].$$
\end{proof} 

\subsection{Examples of spacelike hypersurfaces of anti-de Sitter space and their Gauss maps}

We now give several families of examples of spacelike hypersurfaces of $H^{n+1}_1(-1)$ and their Gauss maps. We also compute the principal curvatures of the hypersurfaces and the angle functions of the Gauss maps.

\begin{example} \label{ex:1princcurv}
For any real constant $\alpha$, with $\sin\alpha \neq 0$, the immersion
$$ a_1: H^n(-1) \to H^{n+1}_1(-1) : p \mapsto (\cos\alpha , (\sin\alpha) p) $$
defines a totally umbilical hypersurface of $H^{n+1}_1(-1)$. In the following, we will refer to such an immersion as \emph{a standard embedding of $H^n$ into $H^{n+1}_1(-1)$}. If we choose $b_1(p) = (\sin\alpha,-(\cos\alpha)p)$ as the unit normal vector field, the principal curvatures are $\lambda_1 = \ldots = \lambda_n = \cot\alpha$. The Gauss map of the hypersurface is given by
$$ G_1: H^n(-1) \to \Q : p \mapsto [(\cos\alpha+i\sin\alpha, (\sin\alpha-i\cos\alpha)p)] = [(i,p)]. $$
Remark that $G_1$ is independent of $\alpha$. This can also be seen as a consequence of Lemma \ref{lem:parallel}, since the hypersurfaces in this family are parallel to each other.

Let us compute the local angle functions of $G_1$. If we choose $A \in \mathcal A$ as in Choice \ref{ex:choiceA} using the horizontal lift $\widetilde G_1: H^n(-1) \to \V : p \mapsto \frac{1}{\sqrt 2}(i,p)$, formula \eqref{eq:choiceA} becomes 
$$ A(dG_1)_p v = -(d\pi)_{\widetilde G_1(p)}\left(d\overline{\widetilde G_1}\right)_p v = -(d\pi)_{\widetilde G_1(p)}\left(d\widetilde G_1\right)_p v = -(d(\pi \circ \widetilde G_1))_p v = -(dG_1)_p v, $$
which shows that $B=-\mathrm{id}$ and $C=0$ or, equivalently, $\theta_1=\ldots=\theta_n=\frac{\pi}{2} \mod \pi$. For this choice of $A$, the one-form $s$ vanishes on tangent vectors by Corollary \ref{cor:MinimalLagrangianImmersion}. From Proposition \ref{prop:verband_theta_h}, we then obtain that $G_1$ is totally geodesic. Note that for a general choice of $A \in \mathcal A$, we have
$$ \theta_1=\ldots=\theta_n=\phi \mod\pi $$
for a function $\phi: H^n(-1) \to \mathbb R$. 

\end{example}

\begin{example} \label{ex:2princcurv}
To describe the second family of examples, we use the map 
$$ \psi: \mathbb{R}^{k+1}_1 \! \times \mathbb{R}^{n-k+1}_1 \to \mathbb{R}^{n+2}_2: (p_1,\ldots,p_{k+1},q_1,\ldots,q_{n-k+1}) \mapsto (p_1,q_1,p_2,\dots,p_{k+1},q_2,\dots,q_{n-k+1}). $$
For any real constant $\alpha$, with $\cos\alpha\sin\alpha \neq 0$, the map
$$ a_2: H^k(-1) \times H^{n-k}(-1) \to H^{n+1}_1(-1): (p,q) \mapsto \psi((\cos\alpha)p,(\sin\alpha)q) $$
is an immersion from a product of hyperbolic spaces into anti-de Sitter space. We will refer to such an immersion as \emph{a standard embedding of $H^k \times H^{n-k}$ into $H^{n+1}_1(-1)$}. If we choose the unit normal $b_2(p,q) = \psi((\sin\alpha)p,-(\cos\alpha)q)$, then the principal curvatures of the hypersurface are $\lambda_1=\ldots=\lambda_k=-\tan\alpha$ and $\lambda_{k+1}=\ldots=\lambda_n=\cot\alpha$. Moreover, the Gauss map is given by
$$ G_2: H^k(-1) \times H^{n-k}(-1) \to \Q: (p,q) \mapsto [\psi((\cos\alpha \! + \! i\sin\alpha)p,(\sin\alpha \! - \! i\cos\alpha)q)] = [\psi(ip,q)]. $$
As in the previous example, the Gauss map is independent of $\alpha$, since all hypersurfaces of this family are parallel to each other.

The angle functions of $G_2$ can be computed as in the previous example, using the horizontal lift $\widetilde G_2: H^k(-1) \times H^{n-k}(-1) \to \V : (p,q) \mapsto \frac{1}{\sqrt 2}\psi(ip,q)$: for a vector $v$ tangent to $H^k(-1)$, we find $A(dG_2)_{(p,q)}v = -(dG_2)_{(p,q)}v$, whereas for a vector $w$ tangent to $H^{n-k}(-1)$, we find $A(dG_2)_{(p,q)}w = (dG_2)_{(p,q)}w$. If we choose the orthonormal frame $\{e_1,\ldots,e_n\}$ such that $e_1,\ldots,e_k$ are tangent to $H^k(-1)$ and $e_{k+1},\ldots,e_n$ are tangent to $H^{n-k}(-1)$, then $\theta_1=\ldots=\theta_k=\frac{\pi}{2} \mod\pi$ and $\theta_{k+1}=\ldots=\theta_n=0 \mod\pi$. As in the previous example, it follows from Corollary \ref{cor:MinimalLagrangianImmersion} and Proposition \ref{prop:verband_theta_h} that $G_2$ is totally geodesic. Note that, with respect to an arbitrary $A \in \mathcal A$, 
$$ \theta_1=\ldots=\theta_k = \phi \mod\pi, \qquad \theta_{k+1}=\ldots=\theta_n = \phi + \frac{\pi}{2} \mod\pi $$
for a function $\phi: H^k(-1) \times H^{n-k}(-1) \to \mathbb R$. 
\end{example}

The above examples give rise to totally geodesic Lagrangian submanifolds of $\Q$. In Section \ref{sec:par} we will prove that they are essentially the only totally geodesic Lagrangian submanifolds of $\Q$ and that they are even the only minimal Lagrangian submanifolds of $\Q$ with parallel second fundamental form.

The following three families of examples are rotation hypersurfaces of $H^{n+1}_1(-1)$. Inspired by \cite{DoCarmoDajczer1983}, where rotation hypersurfaces in Riemannian real space forms were defined, eight types of rotation hypersurfaces in semi-Riemannian real space forms were introduced in \cite{MoruzVrancken2020}. When one requires that the rotation hypersurface is spacelike and the ambient space form is $H^{n+1}_1(-1)$, there are three types remaining, depending on the signature of the metric on the rotation axis. We start here from a description of these three families which is closer to \cite{DoCarmoDajczer1983} than to \cite{MoruzVrancken2020}.

\begin{example} \label{ex:3}
Let $\varphi(t_1,\dots, t_{n-1})=(\varphi_1(t_1,\dots, t_{n-1}),\dots, \varphi_n(t_1,\dots, t_{n-1}))$ be an orthogonal para\-metrisation of $H^{n-1}(-1)\subset \mathbb{R}^n_1$ and let $I \subset \mathbb{R} \rightarrow H^{n+1}_1(-1): s\mapsto (f(s), g(s), 0, \dots, 0, h(s))$ be a curve parametrized by arc length. This means that the real functions $f$, $g$ and $h$ satisfy $-f^2 -g^2+h^2=-1$ and $-(f')^2-(g')^2+(h')^2=1$. Then 
\begin{equation*}
a_3(s, t_1,\dots, t_{n-1}) = (f(s), g(s)\varphi(t_1,\dots, t_{n-1}), h(s)) 
\end{equation*}
parametrizes a rotation hypersurface of $H^{n+1}_1(-1)$ for which, in the notation of \cite{MoruzVrancken2020}, the axis of rotation $\Pi^2$ has signature $( 1 , -1)$. Note that $b_3 = (hg'-gh', (fh'-hf')\varphi, fg'-gf')$ defines a normal vector field to the hypersurface in $H^{n+1}_1(-1)$ satisfying $\langle b_3, b_3 \rangle_2=-1$. We can now compute the principal curvatures associated to this choice of unit normal using the same technique as in \cite{DoCarmoDajczer1983}: the coordinate vector fields $\{\partial_s,\partial_{t_1},\ldots,\partial_{t_{n-1}}\}$ form an orthogonal basis of principal vector fields at every point and the corresponding principal curvatures are respectively 
\begin{equation*}
\lambda_1= \frac{g'' - g}{\sqrt{1+(g')^2 - g^2}}, \quad \lambda_2 = \ldots = \lambda_{n}= -\frac{\sqrt{1+(g')^2 -g^2}}{g}.
\end{equation*}
	
The Gauss map of the hypersurface $a_3$, using the normal vector $b_3$, is given by
\begin{equation*}
G_3 = [(f+i(hg'-gh'), (g+i (fh'-hf'))\varphi, h +i(fg'-gf'))].
\end{equation*}
If we choose $A\in \mathcal{A}$ as in Choice \ref{ex:choiceA}, using the canonical lift of $G_3$, we obtain from \eqref{eq:choiceA} by a straightforward computation that
\begin{equation*}
A (dG_3) \partial_s = \frac{(g''-g)^2-(1-g^2+(g')^2)}{(g''-g)^2+ 1-g^2+(g')^2} (dG_3)\partial_s - \frac{2(g''-g)\sqrt{1-g^2+(g')^2}}{(g''-g)^2+1-g^2+(g')^2}J(dG_3)\partial_s.
\end{equation*}
The coefficients on the right hand side are equal to $\cos(2\theta_1)$ and $-\sin(2\theta_1)$, so they determine $\theta_1$ up to an integer multiple of $\pi$. In particular, we have
\begin{equation*}
\cot\theta_1 = \frac{1+\cos(2\theta_1)}{\sin(2\theta_1)} = \frac{g'' - g}{\sqrt{1+(g')^2 - g^2}} = \lambda_1.
\end{equation*}
%\begin{equation*}
%\theta_1= \frac{1}{2} \arcsin\left(\frac{2(g''-g)\sqrt{1-g^2+(g')^2}}{(g''-g)^2+1-g^2+(g')^2}\right) \mod \pi
%\end{equation*}
This is not a coincidence. Theorem \ref{theo:LagrangiansandGaussmaps} below states that for the particular choice of $A$ we have made here, one always has $\lambda_j = \cot\theta_j$.
In the present example, we can also compute
\begin{equation*}
A (dG_3) \partial_{t_i} = \frac{-2g^2+1+(g')^2}{1+(g')^2}(dG_3)\partial_{t_i} + \frac{2g\sqrt{1-g^2+(g')^2}}{1+(g')^2}J(dG_3)\partial_{t_i}
\end{equation*}
for $i \in \{1,\ldots,n-1\}$, from which it follows that
\begin{equation*}
\cot\theta_j = -\frac{\sqrt{1+(g')^2 -g^2}}{g} = \lambda_j
\end{equation*}
for $j \in \{2,\ldots,n\}$.
\end{example}

\begin{example} \label{ex:4}
Let $\varphi(t_1,\dots, t_{n-1})=(\varphi_1(t_1,\dots, t_{n-1}),\dots, \varphi_n(t_1,\dots, t_{n-1}))$ be an orthogonal para\-metrisation of $S^{n-1}(1)\subset \mathbb{R}^n$ and let $I \subset \mathbb{R} \rightarrow H^{n+1}_1(-1): s\mapsto (f(s), g(s), h(s), 0, \dots, 0)$ be a curve parametrized by arc length. Hence, the functions $f$, $g$ and $h$ satisfy $-f^2 -g^2+h^2=-1$ and $-(f')^2-(g')^2+(h')^2=1$. Then
\begin{equation*}
a_4(s, t_1,\dots, t_{n-1}) = (f(s), g(s), h(s)\varphi(t_1,\dots, t_{n-1}))
\end{equation*}
parametrizes a rotation hypersurface of $H^{n+1}_1(-1)$ for which, in the notation of \cite{MoruzVrancken2020}, the axis of rotation $\Pi^2$ has signature $( -1 , -1)$. Note that $b_4 = (hg'-gh', fh'-hf', (gf'-fg')\varphi)$ is a normal vector field to the hypersurface in $H^{n+1}_1(-1)$ for which $\langle b_4, b_4 \rangle_2=-1$. As in the previous example, we can compute the principal curvatures associated to $b_4$ to be
\begin{equation*}
\lambda_1 = \frac{h - h''}{\sqrt{(h')^2 - h^2-1}}, \quad \lambda_2 = \ldots = \lambda_{n}= \frac{\sqrt{(h')^2 -h^2-1}}{h}.
\end{equation*}
Also here, these correspond to the principal vector fields $\partial_s,\partial_{t_1},\ldots,\partial_{t_{n-1}}$ respectively.
	
The Gauss map of the hypersurface $a_4$ is given by
\begin{equation*}
G_4 = [(f+i(hg'-gh'), g+i (fh'-hf'), (h +i(fg'-gf'))\varphi)].
\end{equation*}
Choosing $A \in \mathcal A$ as in Choice \ref{ex:choiceA} for the canonical lift of $G_4$, we can compute 
\begin{align*}
& A (dG_4) \partial_s = \frac{(h-h'')^2 - ((h')^2-h^2-1)}{(h-h'')^2+(h')^2-h^2-1}(dG_4)\partial_s - \frac{2(h-h'') \sqrt{(h')^2-h^2-1}}{(h-h'')^2+(h')^2-h^2-1}J(dG_4)\partial_s,\\
& A (dG_4) \partial_{t_i} = \frac{(h')^2-2h^2-1}{(h')^2-1} (dG_4) \partial_{t_i} - \frac{2 h \sqrt{(h')^2-h^2-1}}{(h')^2-1} J(dG_4)\partial_{t_i}
\end{align*}
for $i \in \{1,\ldots,n-1\}$. As in the previous example, we obtain
\begin{equation*}
\cot\theta_1 = \frac{h - h''}{\sqrt{(h')^2 - h^2-1}} = \lambda_1, \quad \cot\theta_j = \frac{\sqrt{(h')^2 -h^2-1}}{h} = \lambda_j
\end{equation*}
for $j \in \{2,\ldots,n\}$.
\end{example}

\begin{example} \label{ex:5}
To describe the last example, we will use a different basis of $\mathbb{R}^{n+2}_2$. Denoting the standard basis by $\{e_1,\dots, e_{n+2}\}$, we define $u_1=e_1$, $u_2= e_3+e_2$, $u_3= e_3-e_2$ and $u_j=e_j$ for $j \in \{4,\ldots,n+2\}$. Note that $u_2$ and $u_3$ are null vectors satisfying $\langle u_2,u_3\rangle_2= 2$. Consider a curve  $I \subset \mathbb R \to H^{n+1}_1(-1): s \mapsto f(s)u_1+g(s)u_2+h(s)u_3$ parametrized by arc length. This means that the functions $f$, $g$ and $h$ satisfy $-f^2+4gh=-1$ and $-(f')^2+4g'h'=1$. Then
\begin{equation*}
a_5(s,t_1,\dots,t_{n-1}) = f(s)u_1 + \frac{f^2(s)\!-\!1\!-\!h^2(s) \sum_{i=1}^{n-1} t_i^2}{4h} u_2 + h(s) u_3 + h(s)t_1 u_4  + \ldots + h(s)t_{n-1} u_{n+2}
\end{equation*}
parametrizes a rotation hypersurface of $H^{n+1}_1(-1)$ for which, in the notation of \cite{MoruzVrancken2020}, the axis of rotation $\Pi^2$ has signature $(-1 , 0)$. We define the following vector field : 
\begin{multline*}
b_5 = - \sqrt{(h')^2-h^2}\Bigg( \frac{h'f'\!-\!hf}{(h')^2-h^2}u_1 + \frac{2hh'ff'\!+\!((h')^2\!-\! h^2)(1\!+\!f^2)\! -\! h^2((h')^2\!-\!h^2)\sum_{i=1}^{n-1}t_i^2)}{4h^2((h')^2\!-\!h^2)}u_2 \\ + u_3 + t_1u_4 + \ldots + t_{n-1}u_{n+2} \Bigg).
\end{multline*}
Using that $(hf'-fh')^2= (h')^2-h^2$, one can check that this is a normal vector field to the hypersurface, satisfying $\langle b_5,b_5 \rangle_2 = -1$. The associated principal curvatures are
\begin{equation*}
\lambda_1= \frac{h+h''}{\sqrt{(h')^2-h^2}}, \quad \lambda_2 = \ldots = \lambda_n= \frac{\sqrt{(h')^2 -h^2}}{h}.
\end{equation*}
As in the previous examples, these correspond to the principal vector fields $\partial_s,\partial_{t_1},\ldots,\partial_{t_{n-1}}$ respectively.	

The Gauss map of the hypersurface $a_5$ is given by $G_5 = [(a_5+i b_5)]$. If we choose $A \in \mathcal A$ as in Choice \ref{ex:choiceA} for the canonical lift of $G_5$, we can compute
\begin{align*}
& A (dG_5) \partial_s = \frac{(h+h'')^2-(h')^2+h^2}{(h+h'')^2+(h')^2-h^2}  (dG_5)\partial_s - \frac{2(h+h'')\sqrt{(h')^2-h^2}}{(h')^2-h^2+(h+h'')^2} J(dG_5) \partial_s, \\
& A (dG_5) \partial_{t_i} = \frac{(h')^2-2h^2}{(h')^2} (dG_5)\partial_{t_i} - \frac{2h\sqrt{(h')^2-h^2}}{(h')^2} J(dG_5) \partial_{t_i}
\end{align*}
for $i \in \{1,\ldots,n-1\}$. As in the previous examples, we find
\begin{equation}
\cot\theta_1 = \frac{h+h''}{\sqrt{(h')^2-h^2}} = \lambda_1, \quad \cot\theta_j= \frac{\sqrt{(h')^2 -h^2}}{h} = \lambda_j
\end{equation}
for $j \in \{2,\ldots,n\}$.
\end{example}

\subsection{Relation between principal curvatures and the angle functions of the Gauss map}

Knowing that the Gauss map of a hypersurface of $H^{n+1}_1(-1)$ is a Lagrangian immersion into $\Q$ raises the question whether, given a Lagrangian immersion $f:M^n\rightarrow \Q$, we can go back and find a spacelike hypersurface of $H^{n+1}_1(-1)$ with Gauss map $f$. It turns out that we can always do this, at least locally. A similar question for Lagrangian immersions into $Q^n$ and hypersurfaces of $S^n(1)$ was answered in \cite{VW}, and we can prove the following theorem using similar methods.

\begin{theorem}\label{theo:LagrangiansandGaussmaps}
Let $a: M^n \to H^{n+1}_1(-1)$ be a spacelike hypersurface with unit normal~$b$. Then the Gauss map $G: M^n \to \Q: p \mapsto [a(p)+ib(p)]$ is a Lagrangian immersion. Moreover, if $A$ is chosen as in Choice \ref{ex:choiceA} using the canonical horizontal lift 
	\begin{equation} \label{eq:Ghat}
	\widetilde G: M^n \to {V^*}^{2n+1}_1: p \mapsto \frac{1}{\sqrt 2}(a(p)+ib(p)),
	\end{equation}
then the relation between the principal curvatures $\lambda_1,\ldots,\lambda_n$ of $a$, with respect to the shape operator associated to $b$, and the angle functions $\theta_1,\ldots,\theta_n$ of $G$ is
\begin{equation} \label{theta_lambda_1} 
	\lambda_j = \cot\theta_j
\end{equation}
for $j=1,\ldots,n$.

Conversely, if $f: M^n \to \Q$ is a Lagrangian immersion, then for every point of $M^n$ there exist an open neighborhood $U$ of that point in $M^n$ and an immersion $a: U \to H^{n+1}_1(-1)$ with Gauss map $f|_U$. This immersion is not unique, nor are its principal curvature functions. However, for any choice of $a$, a local frame of principal directions for $a$ is adapted to $f$ in the sense that \eqref{eq:defBandC} holds for any choice of $A$ and the principal curvature functions $\lambda_1,\ldots,\lambda_n$ of $a$ are related to the corresponding local angle functions $\theta_1,\ldots,\theta_n$ by
\begin{equation} \label{eq:angles-princcurv}
\cot(\theta_j-\theta_k) = \pm \frac{\lambda_j \lambda_k + 1}{\lambda_j - \lambda_k} %CHECK THIS! ik denk ok.
\end{equation}
for $j,k=1,\ldots,n$ at points where $\lambda_j \neq \lambda_k$.
\end{theorem}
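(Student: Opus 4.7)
The Lagrangian property of $G$ is established by Lemma~\ref{lem:lagrangian}. I treat the other three assertions in turn.

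\textbf{Forward identity~\eqref{theta_lambda_1}.} I substitute directly into Choice~\ref{ex:choiceA}. From \eqref{eq:LiftOfBasisUnderGaussMap}, $(d\widetilde G)e_j = \tfrac{1}{\sqrt{2}}(1-i\lambda_j)e_j$, so $\overline{(d\widetilde G)e_j} = \tfrac{1}{\sqrt{2}}(1+i\lambda_j)e_j$ and $i(d\widetilde G)e_j = \tfrac{1}{\sqrt{2}}(\lambda_j+i)e_j$. Inserting these into \eqref{eq:choiceA} and demanding $A(dG)e_j = \cos(2\theta_j)(dG)e_j - \sin(2\theta_j)J(dG)e_j$ yields an equation of the form $(d\pi)_{\widetilde G(p)}(\zeta e_j) = 0$ for a complex scalar $\zeta$ depending on $\lambda_j$ and $\theta_j$. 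Since $e_j$ is orthogonal to both $a$ and $b$ in $\R^{n+2}_2$, the complex line $\C\cdot e_j$ is orthogonal to the vertical direction $i\widetilde G$, so $\zeta=0$; the resulting scalar equation gives $\lambda_j = \cot\theta_j$ and shows that the principal frame of $a$ is adapted to $f$ in the sense of \eqref{eq:defBandC} with respect to this choice of $A$.

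\textbf{Local existence of $a$.} By \cite{Reckziegel1985}, $f$ admits a local horizontal lift $\widetilde f:U\to\V$. Writing $\widetilde f = \tfrac{1}{\sqrt 2}(a_0+ib_0)$, equations \eqref{eq:defV} give $a_0,b_0\in H^{n+1}_1(-1)$ with $\langle a_0,b_0\rangle_2=0$, and combining the horizontality $\langle\!\langle(d\widetilde f)X,i\widetilde f\rangle\!\rangle_2=0$ with the derivatives of $\langle a_0,b_0\rangle_2=0$ and $\langle a_0,a_0\rangle_2=\langle b_0,b_0\rangle_2=-1$ shows that both $(da_0)X$ and $(db_0)X$ lie in the Riemannian $n$-plane $\spanned(a_0,b_0)^\perp$. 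To guarantee that $a_0$ is an immersion at $p$, I replace $\widetilde f$ by the rotated (still horizontal) lift $e^{-it}\widetilde f$, yielding the parallel pair $a_t=\cos t\,a_0+\sin t\,b_0$, $b_t=-\sin t\,a_0+\cos t\,b_0$. Injectivity of $(d\widetilde f)_p$ makes the combined map $X\mapsto((da_0)X,(db_0)X)$ injective, and together with the Lagrangian condition (which forces $\langle(da_0)X,(db_0)Y\rangle$ to be symmetric in $X,Y$) this implies that $(da_t)_p=\cos t\,(da_0)_p+\sin t\,(db_0)_p$ is injective for all but finitely many $t\in[0,2\pi)$. For such a $t$, setting $a:=a_t$ and $b:=b_t$ produces a spacelike immersion near $p$ with unit normal $b$ and Gauss map $f$.

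\textbf{Relation~\eqref{eq:angles-princcurv}.} For any hypersurface $a$ with Gauss map $f|_U$, its canonical lift $\widetilde G=\tfrac{1}{\sqrt 2}(a+ib)$ is horizontal (remark following Lemma~\ref{lem:lagrangian}), so Choice~\ref{ex:choiceA} produces an $A_0\in\mathcal A$ for which the forward identity gives $\cot\theta_j^0=\lambda_j$ on the principal frame $\{e_j\}$ of $a$. Lemma~\ref{lem:changeA} then shows that the same frame is adapted to every $A\in\mathcal A$, with angle functions $\theta_j=\theta_j^0-\phi/2$ for a common function $\phi$, so the differences $\theta_j-\theta_k$ are invariants of $f$. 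The subtraction identity $\cot(u-v)=-(\cot u\cot v+1)/(\cot u-\cot v)$, applied at $u=\theta_j^0$, $v=\theta_k^0$, then yields $\cot(\theta_j-\theta_k)=-(\lambda_j\lambda_k+1)/(\lambda_j-\lambda_k)$; the $\pm$ in the statement absorbs the sign change caused by reversing the orientation $b\mapsto-b$, which flips every $\lambda_j$. The main obstacle I anticipate is the existence step: extracting a genuine immersion from a horizontal lift requires a careful orthogonality analysis in $\R^{n+2}_2$ together with a rank-drop argument, and it is there that the Lagrangian condition on $f$ plays the decisive role.
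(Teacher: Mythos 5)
Your proposal is correct, and it follows exactly the route the paper intends: the paper itself gives no proof of this theorem (it defers to the methods of [VW]), but your forward computation via Choice~\ref{ex:choiceA} is precisely what the authors carry out concretely in Examples~\ref{ex:3}--\ref{ex:5}, and your converse via Reckziegel's horizontal lift plus a rotation $e^{-it}\widetilde f$ is the standard argument in this setting. Two small points. First, in the forward step your reduction to a scalar equation is valid because $\ker(d\pi)_z=\spanned\{iz\}$ is a nondegenerate (timelike) line, so a vector of the form $\zeta e_j$ that is both orthogonal to $iz$ and lies in $\ker(d\pi)_z$ must vanish; it is worth saying this explicitly, since orthogonality alone would not suffice in an indefinite metric if the kernel were null. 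Second, the only step you assert rather than prove is that $(da_t)_p=\cos t\,(da_0)_p+\sin t\,(db_0)_p$ is injective for all but finitely many $t$. This is true and your two ingredients are the right ones, but it deserves a line of proof: the image $L=\{(da_0)X+i\,(db_0)X\}$ is an $n$-dimensional subspace of $W\otimes\C$ (with $W=\spanned(a_0,b_0)^\perp$ positive definite) which is isotropic for $\im\langle\!\langle\cdot,\cdot\rangle\!\rangle$ by the symmetry you derived, hence Lagrangian, hence of the form $U\cdot W$ for a unitary $U$; then $\cos t\,(da_0)+\sin t\,(db_0)$ drops rank exactly when $e^{2it}$ lies in the (finite) spectrum of $U^TU$. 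Alternatively, $\det(\cos t\,(da_0)+\sin t\,(db_0))$ is a trigonometric polynomial in $t$, and the Lagrangian condition is what rules out its identical vanishing. Your attribution of the $\pm$ in \eqref{eq:angles-princcurv} to the reversal $b\mapsto -b$ is slightly off (that reversal changes the Gauss map to $[a-ib]$), but since you prove the identity with a definite sign, the statement with $\pm$ follows anyway.
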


Let us revisit the examples given above. By comparing the principal curvatures and angle functions given in all the examples, we see that the relation \eqref{eq:angles-princcurv} is still valid if we allow the value $\infty$. Note also that \eqref{eq:angles-princcurv} holds for any two chosen indices. In Example \ref{ex:3}, Example \ref{ex:4} and Example \ref{ex:5} we have chosen $A \in \mathcal A$ as in Choice \ref{ex:choiceA} for the canonical horizontal lift of the Gauss map and we noticed already that \eqref{theta_lambda_1} holds.

\subsection{Relation between principal curvatures and the mean curvature of the Gauss map}

There is also a relation between the principal curvatures of a spacelike hypersurface of anti-de Sitter space and the mean curvature of its Gauss map, which is given by the following theorem. A similar result for hypersurfaces of $S^{n+1}(1)$ and their Gauss maps into $Q^n$ was proven in  \cite{Palmer1997}. Here we give a slightly different proof, but the main ideas are similar. 
	
\begin{theorem}\label{theo:Palmerformula}
Let $a:M^n \rightarrow H^{n+1}_1(-1)$ be an immersion and let $G: M^n \to \Q$ be the Gauss map of this immersion. Denote by $\lambda_1,\ldots,\lambda_n$ the eigenvalues of the shape operator of $a$, associated to the unit normal $b$ that was used to construct $G$.  If $H$ is the mean curvature vector of $G$, then 
\begin{equation}\label{eq:minimal}
g(JH,\cdot)=\frac{1}{n} d\left(\sum_{i=1}^n\arctan(\lambda_j)\right)=\frac{1}{n} d\left(\im\left(\log\left(\prod_{j=1}^n (1+i\lambda_j)\right)\right)\right).
\end{equation}
\end{theorem}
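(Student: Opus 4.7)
The plan is to choose $A\in\mathcal{A}$ as in Choice \ref{ex:choiceA}, using the canonical horizontal lift $\widetilde G=\tfrac{1}{\sqrt{2}}(a+ib)$. In an orthonormal frame $\{e_1,\dots,e_n\}$ on $M^n$ adapted to the Lagrangian (which, up to rescaling, coincides with the principal frame of $a$), Theorem \ref{theo:LagrangiansandGaussmaps} gives $\lambda_j=\cot\theta_j$, so that
\[
\sum_{j=1}^{n}\theta_j\equiv \frac{n\pi}{2}-\sum_{j=1}^{n}\arctan\lambda_j\pmod{\pi}.
\]
Using the K\"ahler identity $g(JX,Y)=-g(X,JY)$ and the definition $H=\tfrac{1}{n}\sum_j h(e_j,e_j)$, one computes
\[
g(JH,e_i)=-\frac{1}{n}\sum_j g(h(e_j,e_j),Je_i)=-\frac{1}{n}\sum_j h_{jj}^{\,i}.
\]
Summing \eqref{eq:DerivativeOfTheta} over $j$ then yields $g(JH,e_i)=-\tfrac{1}{n}e_i(\sum_j\theta_j)-\tfrac{1}{2}s(e_i)$, so the theorem reduces to showing that the one-form $s$ associated to this specific $A$ vanishes identically on $TM^n$.

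The main obstacle, and the heart of the proof, is this vanishing of $s$. I verify it by a direct computation in the ambient space $\mathbb{C}^{n+2}_2$. Write $\widetilde G=u+iv$ as in \eqref{eq:defV} and $\tilde X=d\widetilde G(X)=\tilde X_r+i\tilde X_i$ for the horizontal lift of $X\in TM^n$. Using $\langle\!\langle u_1+iv_1,u_2+iv_2\rangle\!\rangle_2=\langle u_1,u_2\rangle_2+\langle v_1,v_2\rangle_2$, the conditions that $\tilde X$ be horizontal and tangent to $\V$ (that is, orthogonal to each of $\widetilde G$, $i\widetilde G$, $\overline{\widetilde G}$, $i\overline{\widetilde G}$) unfold into the four orthogonality relations
\[
\langle\tilde X_r,u\rangle_2=\langle\tilde X_r,v\rangle_2=\langle\tilde X_i,u\rangle_2=\langle\tilde X_i,v\rangle_2=0.
\]
These relations are symmetric under $\tilde X\mapsto\overline{\tilde X}$, so $\overline{\tilde X}=\tilde X_r-i\tilde X_i$ is again horizontal and tangent to $\V$. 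On the other hand, the unit normal from Choice \ref{ex:choiceA} is $\zeta=(d\pi)(\overline{\widetilde G})$, and the flat derivative of $\overline{\widetilde G}$ in $\mathbb{C}^{n+2}_2$ along $\tilde X$ is $\overline{\tilde X}$, which is already tangent to $H^{2n+3}_3(-1)$, so that $\nabla^{H^{2n+3}_3(-1)}_{\tilde X}\overline{\widetilde G}=\overline{\tilde X}$. Since $\pi$ is a pseudo-Riemannian submersion and $\overline{\tilde X}$ is horizontal, O'Neill's formula reduces to
\[
\nabla^{\mathbb{C}H(-4)}_X\zeta=(d\pi)(\overline{\tilde X})\in T\Q,
\]
and the Weingarten decomposition $\nabla^{\mathbb{C}H(-4)}_X\zeta=-AX+s(X)J\zeta$ then forces $s(X)=0$.

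With $s\equiv 0$ on tangent vectors, \eqref{eq:DerivativeOfTheta} gives $h_{jj}^{\,i}=e_i(\theta_j)$ and therefore
\[
g(JH,e_i)=-\frac{1}{n}\,e_i\!\left(\sum_{j=1}^{n}\theta_j\right)=\frac{1}{n}\,e_i\!\left(\sum_{j=1}^{n}\arctan\lambda_j\right),
\]
which is the first equality of \eqref{eq:minimal}. The second equality is then an elementary complex-analytic identity: on the principal branch one has $\arctan\lambda=\im\log(1+i\lambda)$, and $\sum_j\log(1+i\lambda_j)-\log\prod_j(1+i\lambda_j)$ is a locally constant integer multiple of $2\pi i$, hence is killed by $d$ after taking imaginary parts.
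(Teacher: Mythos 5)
Your proof is correct, but it takes a genuinely different route from the paper's. The paper never mentions angle functions or the one-form $s$: it computes $n\,g(JH,X)=-\sum_j\langle\!\langle D_XD_{e_j}\widetilde G, iD_{e_j}\widetilde G\rangle\!\rangle_2$ directly in the flat ambient space $\C^{n+2}_2$ and evaluates each term using $(d\widetilde G)\epsilon_j=\tfrac{1}{\sqrt2}(1-i\lambda_j)\epsilon_j$, obtaining $-X(\arctan\lambda_j)$ termwise. You instead assemble the formula from the structural machinery already set up: the relation $\lambda_j=\cot\theta_j$ of Theorem \ref{theo:LagrangiansandGaussmaps}, the derivative formula \eqref{eq:DerivativeOfTheta}, and a new auxiliary fact, namely that the one-form $s$ associated to the canonical choice of $A$ from Choice \ref{ex:choiceA} vanishes on $TM^n$. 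Your verification of that last point is sound: the four orthogonality relations $\langle\tilde X_r,u\rangle_2=\langle\tilde X_r,v\rangle_2=\langle\tilde X_i,u\rangle_2=\langle\tilde X_i,v\rangle_2=0$ do characterize horizontal vectors tangent to $\V$ and are invariant under conjugation, $\overline{\tilde X}$ is indeed tangent to $\H(-1)$ at $\widetilde G(p)$ so the flat and intrinsic derivatives of $\overline{\widetilde G}$ agree, and since $\overline{\widetilde G}$ is the horizontal lift of $\zeta$ along the horizontal curve, the submersion argument gives $\nabla^{\CH(-4)}_X\zeta=(d\pi)(\overline{\tilde X})\in T\Q$, killing the $J\zeta$-component. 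The trade-offs: your argument leans on Theorem \ref{theo:LagrangiansandGaussmaps}, whose proof the paper only sketches by reference, whereas the paper's computation is self-contained and shorter; on the other hand, your vanishing-of-$s$ lemma is a reusable observation (it explains, for instance, why Corollary \ref{cor:MinimalLagrangianImmersion} is invoked so painlessly in Examples \ref{ex:1princcurv} and \ref{ex:2princcurv}) and makes the theorem a transparent consequence of the angle-function formalism. Both proofs are valid; just make sure, if you present yours, to state the vanishing of $s$ as a separate lemma since it is not recorded elsewhere in the paper.
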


\begin{proof}
Let $\{\epsilon_1,\dots, \epsilon_n\}$ be an orthonormal frame on $M^n$ with respect to the metric induced by $a$, such that $S \epsilon_j = \lambda_j \epsilon_j$ for $j \in \{1,\ldots,n\}$, where $S$ is the shape operator associated to $b$. Then the vector fields $e_j=\tfrac{\sqrt{2}}{\sqrt{1+\lambda_j^2}}\epsilon_j$ form an orthonormal frame on $M^n$ with respect to the metric induced by the canonical lift $\tilde{G}$. If $D$ is the Euclidean connection on $\mathbb R^{2n+4}_2$, we can identify $e_j$ with $(d\widetilde{G})e_j = D_{e_j}\tilde{G}$, its image under the derivative of $\tilde{G}$. 
		
We compute $g(JH,X)$ for a tangent vector $X$ to $M^n$. Since $nH$ is the trace of the second fundamental form $h$ and $g(h(\cdot,\cdot),J\cdot)$ is totally symmetric, it follows that
\begin{equation}\label{eq:gJHX}
n g(JH,X) = \sum_{j=1}^n g(Jh(e_j,e_j), X) = -\sum_{j=1}^n g(\lccQ_X e_j, Je_j) = -\sum_{j=1}^n \left\langle\!\!\left\langle D_{X} D_{e_j} \tilde{G}, iD_{e_j} \tilde{G} \right\rangle\!\!\right\rangle_2.
\end{equation}
Since $e_j=\tfrac{\sqrt{2}}{\sqrt{1+\lambda_j^2}}\epsilon_j$, we get from \eqref{eq:LiftOfBasisUnderGaussMap} that
\begin{multline*}
\left\langle\!\!\left\langle D_{X} D_{e_j} \tilde{G}, iD_{e_j} \tilde{G} \right\rangle\!\!\right\rangle_2
=\frac{2}{1+\lambda_j^2} \left\langle\!\!\left\langle D_XD_{\epsilon_j}\tilde{G}, iD_{\epsilon_j}\tilde{G} \right\rangle\!\!\right\rangle_2
= \frac{1}{1+\lambda_j^2} \left\langle\!\left\langle D_X(1-i\lambda_j)\epsilon_j, (i+\lambda_j)\epsilon_j \right\rangle\!\right\rangle_2 \\
= \frac{1}{1+\lambda_j^2} \left\langle\!\left\langle -iX(\lambda_j)\epsilon_j, i\epsilon_j \right\rangle\!\right\rangle_2 
= -\frac{X(\lambda_j)}{1+\lambda_j^2}
= -X(\arctan(\lambda_j)) 
= -X(\im(\log(1+i\lambda_j))).
\end{multline*}
%Notice that $\arctan(\lambda_j)=\frac{i}{2}(\log(1-i\lambda_j)-\log(1+i\lambda_j)=\frac{i}{2}(\overline{\log(1+i\lambda_j)}-\log(1+i\lambda_j)=\frac{i}{2}(-2i\im(\log(1+i\lambda_j))=\im(\log(1+i\lambda_j))$.
The result follows by substituting this in \eqref{eq:gJHX}.
\end{proof}

The Gauss maps $G_1$ and $G_2$ are minimal immersions since the principal curvatures of the corresponding hypersurfaces of anti-de Sitter space are constant.

\section{Minimal Lagrangian submanifolds of $\Q$ with parallel second fundamental form} \label{sec:par}
%%%%%%%%%%%%%%%%%%%%%%%%%%%%%%%%%%%%%%%%%%%%%%%%%%%%%%%%%%%%%%%%%%%%%%%%%%%%%%%%%%%%%%%	

In this section we classify all minimal Lagrangian submanifolds of $\Q$ with parallel second fundamental form. In particular, we show that they are totally geodesic and essentially correspond to the Gauss maps from Example \ref{ex:1princcurv} and Example \ref{ex:2princcurv} above.

\begin{theorem} \label{theo:parallel_sff}
Let $f: M^n \to \Q$, with $n \geq 2$, be a minimal Lagrangian immersion with parallel second fundamental form. Then, up to isometries of $\Q$, $f$ is the Gauss map of a standard embedding of an open part of $H^n$ or of $H^k \times H^{n-k}$ into $H^{n+1}_1(-1)$. In particular, $f$ is totally geodesic.
\end{theorem}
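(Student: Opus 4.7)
The plan is to combine the Codazzi equation \eqref{eq:CodazziEqLagSubm} with the assumption $\bar\nabla h = 0$ to force the local angle functions into a very rigid pattern, and then to use minimality via \eqref{eq:DerivativeOfTheta} to kill the second fundamental form outright. First I fix $A\in\mathcal{A}$ as in Choice \ref{ex:choiceA2}, so that $\theta_1+\cdots+\theta_n\equiv 0\mod\pi$; Corollary \ref{cor:MinimalLagrangianImmersion} then gives $s|_{TM^n}=0$, and \eqref{eq:DerivativeOfTheta} simplifies to $e_i(\theta_j) = h_{jj}^i$. Next I substitute $X=e_i$, $Y=Z=e_j$ with $i\neq j$ into the vanishing right-hand side of \eqref{eq:CodazziEqLagSubm}, using the simultaneous diagonalisation $Be_j = \cos(2\theta_j) e_j$, $Ce_j = \sin(2\theta_j) e_j$; a short computation yields $\sin(2(\theta_i-\theta_j))=0$, so each difference $\theta_i-\theta_j$ lies in $\tfrac{\pi}{2}\mathbb{Z}$. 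Locally this partitions $\{1,\ldots,n\}$ into at most two blocks $I_1, I_2$, with $|I_1| = k$ and $|I_2|=n-k$, according to the two possible residues of $\theta_j$ modulo $\pi$.

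Because each $\theta_j-\theta_1$ is then locally constant, $e_i(\theta_j) = e_i(\theta_1)$ is independent of $j$, and hence so is $h_{jj}^i$; minimality forces $0 = \sum_j h_{jj}^i = n h_{11}^i$, so $h_{jj}^i\equiv 0$ for all $i, j$, and in particular the common angle $\theta$ is (locally) constant. For $j\neq k$ lying in the same block, $\sin(\theta_j-\theta_k)=0$ while $\cos(\theta_j-\theta_k)=\pm 1$, so \eqref{eq:relation_h_omega} gives $h_{ij}^k=0$ for every $i$. Any triple of pairwise distinct indices contains two indices in a common block, so combining the full symmetry $h_{ij}^k = h_{jk}^i = h_{ki}^j$ with the previous vanishings shows that every component of $h$ is zero, i.e.\ $f$ is totally geodesic.

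To match $f$ with the examples, I apply Theorem \ref{theo:LagrangiansandGaussmaps} to obtain a local spacelike hypersurface $a\colon U\to H^{n+1}_1(-1)$ whose Gauss map equals $f|_U$; the relation $\lambda_j=\cot\theta_j$ from \eqref{theta_lambda_1} (with $A$ as in Choice \ref{ex:choiceA}) and Lemma \ref{lem:changeA} together show that $a$ has at most two constant principal curvatures, with principal curvatures from different blocks satisfying $\lambda_j \lambda_k = -1$. Replacing $a$ by a suitable parallel hypersurface via Lemma \ref{lem:parallel} does not change $f$; in the one-block case this reduces $a$ to a totally umbilic spacelike hypersurface, which must be a standard embedding of $H^n$ as in Example \ref{ex:1princcurv}, whereas in the two-block case it reduces $a$ to a spacelike hypersurface with two constant principal curvatures whose product is $-1$, which must be a standard embedding of $H^k\times H^{n-k}$ as in Example \ref{ex:2princcurv}, up to an ambient isometry of $H^{n+1}_1(-1)$.

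I expect the Codazzi/minimality argument in the first two paragraphs to be essentially routine. The main obstacle is the last identification step: showing that the hypersurface $a$, which is only well-defined up to parallel translation and ambient isometries, can always be put into one of the two explicit standard forms of Examples \ref{ex:1princcurv} and \ref{ex:2princcurv}. This rests either on a local classification of spacelike hypersurfaces of $H^{n+1}_1(-1)$ with at most two constant principal curvatures, or equivalently on a transitivity argument for the isometry group of $\Q$ acting on Lagrangian tangent data with prescribed $(B,C)$-decomposition type $(k,n-k)$.
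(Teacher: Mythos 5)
Your route to total geodesy is sound and genuinely different from the paper's: you kill $h$ directly on the Lagrangian side, using the constancy of the angle differences together with minimality and \eqref{eq:relation_h_omega}, whereas the paper obtains total geodesy only at the very end, after identifying $f$ with the Gauss maps of Examples \ref{ex:1princcurv} and \ref{ex:2princcurv}. That part of your argument is correct: the pigeonhole step works because there are at most two residues of $\theta_j$ modulo $\pi$, and for $n=2$ every component of $h$ already has a repeated index, so the case distinction is exhaustive.

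There is, however, a genuine gap where you pass to the hypersurface: the constancy of the principal curvatures of $a$ does not follow from what you have proved. What you control are the angle functions for the $A$ of Choice \ref{ex:choiceA2}, while the relation $\lambda_j=\cot\theta_j$ of \eqref{theta_lambda_1} holds for the $A$ of Choice \ref{ex:choiceA}; by Lemma \ref{lem:changeA} the two sets of angles differ by $\phi/2$ for a \emph{function} $\phi$, not a constant. Only the differences $\theta_j-\theta_k$ are gauge-invariant, so all you can extract from \eqref{eq:angles-princcurv} is the pointwise alternative $\lambda_i=\lambda_j$ or $\lambda_i\lambda_j=-1$; the common values may still vary from point to point, and a parallel displacement (which shifts all angles by a single \emph{constant}) cannot absorb a non-constant $\phi$. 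The paper closes exactly this gap with the Codazzi equation of the hypersurface $a$ itself, which gives $e_i(\lambda_j)=0$ for $i\neq j$ and hence, combined with the algebraic alternative, constancy of the $\lambda_j$. (Alternatively, since you already know $H=0$, Theorem \ref{theo:Palmerformula} shows that $\sum_j\arctan\lambda_j$ is constant, whence by Corollary \ref{cor:MinimalLagrangianImmersion} and \eqref{eq:DerivativeOfTheta} the canonical-lift angles, and therefore the $\lambda_j$, are constant.) Finally, the identification you flag as the ``main obstacle'' is settled in the paper by citing the classification of spacelike isoparametric hypersurfaces of anti-de Sitter space in \cite{Lopez}; once constancy is established, this is the only external input needed.
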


\begin{proof}
Since $\bar\nabla h = 0$, the equation of Codazzi \eqref{eq:CodazziEqLagSubm} yields
$$ g(BY,Z)CX - g(BX,Z) CY - g(CY,Z) BX + g(CX,Z) BY = 0 $$
for all vector fields $X$, $Y$ and $Z$ on $M^n$. Denote by $\{e_1,\ldots,e_n\}$ a local orthonormal frame on $M^n$, diagonalizing $B$ and $C$ and corresponding to local angle functions $\theta_1,\ldots,\theta_n$. By choosing $X=Z=e_i$ and $Y=e_j$ for different indices $i$ and $j$, we obtain $\sin(2(\theta_i-\theta_j))=0$ and hence
$$ \forall i,j \in \{1,\ldots,n\}, \ \exists m_{ij} \in \mathbb Z : \ \theta_i-\theta_j = m_{ij} \frac{\pi}{2}. $$
It now follows from Theorem \ref{theo:LagrangiansandGaussmaps} that $f$ is the Gauss map of an immersion $a:M^n \to H^{n+1}_1(-1)$ whose principal curvature functions $\lambda_1,\ldots,\lambda_n$ satisfy
\begin{equation} \label{eq:parallel1}
\forall i,j \in \{1,\ldots,n\}: \ \lambda_i = \lambda_j \mbox{ or } \lambda_i \lambda_j + 1 = 0.
\end{equation}
On the other hand, the equation of Codazzi for the immersion $a$ yields that
\begin{equation} \label{eq:parallel2}
\forall i,j \in \{1,\ldots,n\}: \ e_i(\lambda_j)e_j - e_j(\lambda_i)e_i = 0.
\end{equation}
It follows from \eqref{eq:parallel1} that there are at most two different principal curvatures and, if there are two different ones, their product is equal to $-1$. Combining this with \eqref{eq:parallel2} yields that the principal curvatures are constant, i.e., that the hypersurface $a$ is isoparametric. It was proven in \cite{Lopez} that there are only two types of spacelike isoparametric hypersurfaces of anti-de Sitter space and it is not hard to see that these two families precisely correspond to Example~\ref{ex:1princcurv} and Example~\ref{ex:2princcurv}. We already checked that the Gauss maps in these examples are totally geodesic.
\end{proof} 

\begin{corollary}
Let $f: M^n \to \Q$, with $n \geq 2$, be a totally geodesic Lagrangian immersion. Then, up to isometries of $\Q$, $f$ is the Gauss map of a standard embedding of an open part of $H^n$ or of $H^k \times H^{n-k}$ into $H^{n+1}_1(-1)$.
\end{corollary}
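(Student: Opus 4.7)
The plan is to observe that the hypothesis of the corollary is strictly stronger than the hypothesis of Theorem \ref{theo:parallel_sff}, so that the corollary reduces to a direct application of that theorem.

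More precisely, suppose $f: M^n \to \Q$ is a totally geodesic Lagrangian immersion. By definition, the second fundamental form $h$ of $f$ vanishes identically. Two consequences follow immediately. First, $\bar\nabla h = 0$ trivially, so $f$ has parallel second fundamental form. Second, the mean curvature vector $H = \tfrac{1}{n}\operatorname{tr} h$ vanishes, so $f$ is minimal. Hence $f$ satisfies the hypotheses of Theorem \ref{theo:parallel_sff}.

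Invoking that theorem, we conclude that, up to isometries of $\Q$, $f$ is the Gauss map of a standard embedding of an open part of $H^n$ or of $H^k \times H^{n-k}$ into $H^{n+1}_1(-1)$, which is the statement of the corollary. No further analysis is needed; in particular, the classification of spacelike isoparametric hypersurfaces of anti-de Sitter space is already used inside the proof of Theorem \ref{theo:parallel_sff} and does not need to be revisited here.

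There is no genuine obstacle to this argument: the corollary is a formal consequence of the preceding theorem, relying only on the fact that $h\equiv 0$ implies both minimality and parallelism of $h$.
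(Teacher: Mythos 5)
Your proposal is correct and is exactly the intended argument: totally geodesic means $h\equiv 0$, which gives both minimality and $\bar\nabla h=0$ for free, so the corollary is an immediate application of Theorem \ref{theo:parallel_sff}. The paper treats this as an immediate consequence and gives no separate proof, so your reasoning matches the paper's.
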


%%%%%%%%%%%%%%%%%%%%%%%%%%%%%%%%%%%%%%%%%%%%%%%%%%%%%%%%%%%%%%%%%%%%%%%%%%%%%%%%%%%
\section{Minimal Lagrangian submanifolds of $\Q$ with constant sectional curvature} 
\label{sec:4}
%%%%%%%%%%%%%%%%%%%%%%%%%%%%%%%%%%%%%%%%%%%%%%%%%%%%%%%%%%%%%%%%%%%%%%%%%%%%%%%%%%%

We can compute the sectional curvature of a Lagrangian submanifold of $\Q$ from the equation of Gauss \eqref{eq:GaussEqLagSubm}. If $f: M^n \to \Q$ is a Lagrangian immersion and $\{e_1,\ldots,e_n\}$ is a local orthonormal frame on $M^n$ such that $Ae_i = \cos(2\theta_i)e_i - \sin(2\theta_i)Je_i$ for some $A \in \mathcal A$ and all $i \in \{1,\ldots,n\}$, then the sectional curvature of the metric on $M^n$ induced by $f$ is determined by
\begin{equation} \label{eq:sectionalcurvature} 
K(\mbox{span}\{e_i,e_j\}) = -2\cos^2(\theta_i-\theta_j) + g(h(e_i,e_i),h(e_j,e_j)) - g(h(e_i,e_j),h(e_i,e_j)).
\end{equation}
In particular, the Gauss map $G_1$ from Example \ref{ex:1princcurv} gives rise to a metric of constant sectional curvature $c=-2$ on $H^n(-1)$ and, in dimension $n=2$, the Gauss map $G_2$ from Example \ref{ex:2princcurv} gives rise to a metric of constant sectional curvature $c=0$ on $H^1(-1) \times H^1(-1)$.

In this section we prove that these two are essentially the only minimal Lagrangian submanifolds with constant sectional curvature of $\Q$.
	
\begin{theorem}\label{theo:csc}
Let $f:M^n\rightarrow \Q$, $n \geq 2$, be a minimal Lagrangian immersion such that $M^n$ has constant sectional curvature $c$. Then, up to isometries of $\Q$, $f$ is the Gauss map of a standard embedding of an open part of $H^n$ into $H^{n+1}_1(-1)$ or of $H^1 \times H^1$ into $H^3_1(-1)$. In the former case
$c = -2$ and in the latter case $c = 0$.
\end{theorem}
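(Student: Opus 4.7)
The plan is to reduce the theorem to showing that $f$ is totally geodesic, after which the corollary of Theorem~\ref{theo:parallel_sff} identifies the candidate immersions, and a direct computation of $c$ in the two surviving examples completes the classification.

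First I would apply Choice~\ref{ex:choiceA2} to fix an $A \in \mathcal{A}$ along $f$ such that $\theta_1 + \cdots + \theta_n = 0 \mod \pi$. By Corollary~\ref{cor:MinimalLagrangianImmersion}, minimality then forces $s$ to vanish on tangent vectors to $M^n$, so the relations of Proposition~\ref{prop:verband_theta_h} simplify to
\begin{equation*}
e_i(\theta_j) = h_{jj}^i, \qquad \sin(\theta_j - \theta_k)\,\omega_j^k(e_i) = \cos(\theta_j - \theta_k)\,h_{ij}^k \quad (j \neq k)
\end{equation*}
with respect to a local adapted orthonormal frame $\{e_1, \ldots, e_n\}$. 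Substituting into \eqref{eq:sectionalcurvature}, the constant sectional curvature hypothesis translates to
\begin{equation*}
c = -2\cos^2(\theta_i - \theta_j) + \sum_k h_{ii}^k h_{jj}^k - \sum_k (h_{ij}^k)^2 \qquad (i \neq j),
\end{equation*}
while the Codazzi equation \eqref{eq:CodazziEqLagSubm}, expanded in components, becomes
\begin{equation*}
(\bar\nabla h)_{ijk}^\ell - (\bar\nabla h)_{jik}^\ell = \bigl(\delta_{jk}\delta_{i\ell} - \delta_{ik}\delta_{j\ell}\bigr)\sin\bigl(2(\theta_i - \theta_j)\bigr).
\end{equation*}

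Next I would combine these identities with the trace-free conditions $\sum_j h_{jj}^k = 0$ coming from minimality and the total symmetry of $h_{ij}^k$ in its three indices. Differentiating the constant curvature identity along $e_m$ and inserting $e_m(\theta_j) = h_{jj}^m$ yields polynomial relations among the $h_{ij}^k$ and the quantities $\sin(2(\theta_i - \theta_j))$, from which I expect to derive the following dichotomy: either all $\theta_i$ coincide modulo $\pi$, or the angle functions take exactly two distinct values differing by $\pi/2$ modulo $\pi$ and moreover $n = 2$. A further manipulation should then yield $h_{ij}^k = 0$ for all $i, j, k$, i.e.\ that $f$ is totally geodesic. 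The main obstacle is precisely this vanishing, and in particular the control of the completely off-diagonal components $h_{ij}^k$ with $i, j, k$ distinct, which do not enter the constant-curvature identity directly; for these I would combine \eqref{eq:relation_h_omega} with the symmetrisation of Codazzi, following the pattern of the corresponding argument for $Q^n$ carried out in \cite{LMVdVVW}.

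Once $f$ is totally geodesic, the corollary of Theorem~\ref{theo:parallel_sff} identifies it, up to isometries of $\Q$, with the Gauss map of a standard embedding of $H^n$ or of $H^k \times H^{n-k}$ into $H^{n+1}_1(-1)$. In the former case Example~\ref{ex:1princcurv} gives $\theta_1 = \cdots = \theta_n \mod \pi$, and \eqref{eq:sectionalcurvature} with $h = 0$ yields $K \equiv -2$. In the latter case Example~\ref{ex:2princcurv} produces two groups of angle functions differing by $\pi/2 \mod \pi$, so that $K$ equals $-2$ on planes tangent to a single hyperbolic factor and $0$ on mixed planes; constancy of $K$ then forces the existence of only one tangent two-plane in total, which happens exactly when $n = 2$ and $k = 1$, yielding $c = 0$. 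This exhausts all possibilities and proves the theorem.
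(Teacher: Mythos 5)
Your setup (Choice~\ref{ex:choiceA2}, Corollary~\ref{cor:MinimalLagrangianImmersion}, the curvature identity from \eqref{eq:sectionalcurvature} and the component form of Codazzi) matches the paper's, and your endgame (totally geodesic $\Rightarrow$ corollary of Theorem~\ref{theo:parallel_sff} $\Rightarrow$ compute $c$ via \eqref{eq:sectionalcurvature} with $h=0$) is correct. But the core of the proof is missing: the dichotomy and the vanishing of $h$ are exactly where all the difficulty lies, and you only assert them (``I expect to derive'', ``a further manipulation should then yield''). Moreover, the dichotomy you state is not the correct intermediate step. What the paper actually proves (Lemma~\ref{prop:CSCAllAnglesDiffOrEqual}) is that for $n\geq 3$ the angle functions are either \emph{all equal} or \emph{all pairwise different} modulo $\pi$; the second branch is not immediately absurd and is not eliminated by any quick manipulation. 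Killing it is the bulk of the paper's argument: Lemma~\ref{lem:EquationsConstSecCurv} and Lemma~\ref{prop:CSCHijkZero} first, and then a genuine case analysis in each dimension. In $n=3$, for instance, there is a candidate family with $h\neq 0$ (the $\alpha_3\neq 0$ configuration in Case~3 of Proposition~\ref{prop:CSCdim3}) that survives all the pointwise algebraic constraints and is only excluded by differentiating the Gauss and Codazzi equations repeatedly until the angle functions are forced to be constant, which then contradicts the classification of isoparametric spacelike hypersurfaces in \cite{Lopez}. Your plan gives no mechanism for ruling out such configurations, and in particular ``differentiating the constant curvature identity'' does not control the completely off-diagonal $h_{ij}^k$, which is why the paper needs Lemma~\ref{prop:CSCHijkZero} as a separate statement.

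Two further points. First, the case $n=2$ does not fit your scheme at all: in dimension $2$ there is a single tangent plane at each point, so the ``compare two planes'' step degenerates, and the paper instead uses the isometry ${Q^*}^2\cong H^2(-4)\times H^2(-4)$ and quotes the classification of constant-curvature minimal Lagrangian surfaces in $H^2(c)\times H^2(c)$ from \cite{GVWX}; your claim that the two-valued-angle branch ``forces $n=2$'' is backwards — $n=2$ is an input to a separate argument, not an output of the general one. Second, a minor computational slip: expanding \eqref{eq:CodazziEqLagSubm} in the frame gives $(\bar\nabla h)_{ijk}^{\ell}-(\bar\nabla h)_{jik}^{\ell}=(\delta_{jk}\delta_{i\ell}+\delta_{ik}\delta_{j\ell})\sin(2(\theta_i-\theta_j))$, with a plus sign between the two Kronecker products, not a minus.
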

	
The proof of the theorem relies on several lemmas and propositions. We will only give the proofs if they are sufficiently different from their counterparts for minimal Lagrangian submanifolds with constant sectional curvature of $Q^n$, see \cite{LMVdVVW}.
	
\begin{lemma}\label{lem:EquationsConstSecCurv}
Let $f:M^n\rightarrow \Q$ be a Lagrangian immersion such that $M^n$ has constant sectional curvature. Assume an almost complex structure $A\in\mathcal{A}$ is fixed on $\Q$ and $\{e_1,\dots,e_n\}$ is an orthonormal frame on $M^n$ such that $Ae_i = \cos(2\theta_i)e_i - \sin(2\theta_i)Je_i$ for all $i \in \{1,\ldots,n\}$. Then
\begin{equation*}
\begin{aligned}
\sin(&\theta_i-\theta_k)\sin(2\theta_j-\theta_k-\theta_i)(h_{i\ell}^k Je_j+\delta_{j\ell}h(e_i,e_k))\\ 
&+ \sin(\theta_k-\theta_j)\sin(2\theta_i-\theta_j-\theta_k)(h_{j\ell}^kJe_i +\delta_{i\ell}h(e_k,e_j))\\
&+ \sin(\theta_j-\theta_i)\sin(2\theta_k-\theta_i-\theta_j)(h_{i\ell}^jJe_k+\delta_{k\ell}h(e_j,e_i))=0
\end{aligned}
\end{equation*}
for all $i,j,k,l \in \{1,\dots,n\}$. In particular,
\begin{align}
&h_{ii}^k \sin(\theta_k-\theta_i)\sin(\theta_i+\theta_k-2\theta_j)=h_{jj}^k\sin(\theta_k-\theta_j)\sin(\theta_j+\theta_k-2\theta_i),\label{eq:ConstSecCurv1}\\
&h_{ij}^k\sin(\theta_j-\theta_i)\sin(\theta_i+\theta_j-2\theta_k)=0 \nonumber
\end{align}
for mutually different $i,j,k \in \{1,\dots,n\}$ and
\begin{equation*}
h_{ij}^k\sin(\theta_i-\theta_j)\sin(\theta_i+\theta_j-2\theta_{\ell})=0
\end{equation*}
for mutually different $i,j,k,l=1,\dots,n$.
\end{lemma}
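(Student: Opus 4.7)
My approach is to leverage the constant sectional curvature via a careful manipulation of the second-order covariant derivatives of the second fundamental form. The preliminary input is an explicit formula for the covariant derivatives of $B$ and $C$: starting from the ambient relation $\nabla^{\Q}_X A = s(X)\,JA$ and decomposing both sides into tangential and normal components with respect to the Lagrangian immersion (using the Gauss and Weingarten formulas together with $\nabla^{\Q} J = 0$), a short calculation yields
\begin{align*}
(\nabla_X B)Y &= s(X)\,CY + Jh(X,CY) + CJh(X,Y),\\
(\nabla_X C)Y &= s(X)\,BY + Jh(X,BY) - BJh(X,Y).
\end{align*}

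I then differentiate the Codazzi equation \eqref{eq:CodazziEqLagSubm} in an auxiliary direction $W$, obtaining an expression for $(\bar\nabla^2 h)(W,X,Y,Z) - (\bar\nabla^2 h)(W,Y,X,Z)$ in terms of $\bar\nabla h$ and the derivatives above. In parallel, the Ricci identity for $\bar\nabla h$ reads
\[ (\bar\nabla^2 h)(X,Y,Z,W) - (\bar\nabla^2 h)(Y,X,Z,W) = R^\perp(X,Y)h(Z,W) - h(R(X,Y)Z,W) - h(Z,R(X,Y)W), \]
and under the assumption $R(X,Y)V = c(g(Y,V)X - g(X,V)Y)$ together with the K\"ahler identity $R^\perp(X,Y)JV = JR(X,Y)V$ for $V\in TM$ (which encodes the Remark that the Ricci equation is equivalent to the Gauss equation here), the right-hand side collapses to a purely first-order expression in $h$ and $c$. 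A cyclic sum over the three leading slots of $\bar\nabla^2 h$ cancels all second derivatives and yields a pointwise tensorial identity among $h$, $B$, and $C$.

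Finally, I evaluate the resulting identity on the frame $\{e_1,\ldots,e_n\}$ diagonalising $A$, substituting $e_i(\theta_j) = h_{jj}^i - s(e_i)/2$ and $\sin(\theta_j - \theta_k)\omega_j^k(e_i) = \cos(\theta_j - \theta_k) h_{ij}^k$ from Proposition \ref{prop:verband_theta_h}, and applying product-to-sum trigonometric identities such as $\cos(2\theta_a) - \cos(2\theta_b) = -2\sin(\theta_a - \theta_b)\sin(\theta_a + \theta_b)$ and the analogous expansions for $\sin(2\theta_a - \theta_b - \theta_c)$. These manipulations reorganise the expression into the cyclic form displayed in the lemma. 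The three ``in particular'' statements then follow by specialisation of indices: the first by setting $\ell = k$ and extracting the coefficient of $h(e_i,e_k)$ in the tangential direction; the second by projecting the master identity onto $Je_k$ with $\ell\in\{i,j\}$; and the third by taking $\ell \notin \{i,j,k\}$ so that all Kronecker-delta terms vanish. The main obstacle is the lengthy sign-and-coefficient bookkeeping in this collapse; the paper defers this to the parallel calculation in \cite{LMVdVVW} for $Q^n$, with the caveat that the sign flip in \eqref{eq:RiemCurQn*} relative to $Q^n$ must be tracked, but importantly leaves the cyclic structure of the final identity intact.
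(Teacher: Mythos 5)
Your overall strategy is the right one, and it is in fact the mechanism behind the proof that the paper omits (it defers this lemma to the analogous computation in \cite{LMVdVVW}): differentiate the Codazzi equation \eqref{eq:CodazziEqLagSubm} covariantly, compare with the Ricci identity for $\bar\nabla h$, and sum cyclically over the first three slots so that all second derivatives cancel. Under the constant curvature hypothesis the cyclic sum of the curvature terms vanishes outright (first Bianchi identity, total symmetry of $g(h(\cdot,\cdot),J\cdot)$, and $R^\perp(X,Y)J=JR(X,Y)$), so one is left with $\sum_{\mathrm{cyc}}(\nabla_W T)(X,Y,Z)=0$, where $T$ denotes the right-hand side of \eqref{eq:CodazziEqLagSubm}; evaluated on the diagonalising frame this is precisely the displayed master identity, and the three ``in particular'' statements do follow by the index specialisations you describe (modulo the slip that $Je_j$ and $h(e_i,e_k)$ are normal vectors, so one extracts normal components, not ``tangential directions'').

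There is, however, a concrete error in the preliminary input through which the entire computation is funnelled: your formula for $\nabla C$ has the wrong signs. With the paper's convention $AX=BX-JCX$, the correct expressions are
\begin{align*}
(\nabla_XB)Y &= s(X)\,CY + Jh(X,CY) + CJh(X,Y),\\
(\nabla_XC)Y &= -s(X)\,BY - Jh(X,BY) - BJh(X,Y),
\end{align*}
so your second line is wrong in the signs of both $s(X)BY$ and $Jh(X,BY)$. Two quick checks expose this: differentiating $B^2+C^2=\operatorname{id}$ forces $(\nabla_X B)B+B(\nabla_X B)+(\nabla_X C)C+C(\nabla_X C)=0$, which holds for the formulas above but, with your version, leaves the nonzero remainder $4s(X)BCY+2Jh(X,BCY)+2CJh(X,BY)$; and pairing $(\nabla_{e_i}C)e_j$ with $e_j$ must reproduce $e_i(\theta_j)=h^i_{jj}-s(e_i)/2$ from \eqref{eq:DerivativeOfTheta}, whereas your formula yields $e_i(\theta_j)=s(e_i)/2$. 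Since every coefficient in the lemma arises from substituting $\nabla B$ and $\nabla C$ into $\nabla_W T$, this error would corrupt the final identity. Beyond that, note that the ``lengthy sign-and-coefficient bookkeeping'' you set aside is exactly the content of the lemma --- the specific factors $\sin(\theta_i-\theta_k)\sin(2\theta_j-\theta_k-\theta_i)$ are never actually derived --- so as written the proposal is a correct plan with a flawed ingredient rather than a proof.
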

	
\begin{lemma}\label{prop:CSCAllAnglesDiffOrEqual}
For $n\geq 3$, let $f:M^n\rightarrow\Q$ be a minimal Lagrangian immersion such that $M^n$ has constant sectional curvature. Then the local angle functions are either all the same or all different modulo~$\pi$. In the former case, the immersion is the Gauss map of a standard embedding of an open part of $H^n$ into $H^{n+1}_1(-1)$.
\end{lemma}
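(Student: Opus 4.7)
The plan is to work locally and exploit Lemma \ref{lem:EquationsConstSecCurv} after normalising the almost product structure. I would adopt Choice \ref{ex:choiceA2} for $A \in \mathcal{A}$ so that $\theta_1 + \cdots + \theta_n \equiv 0 \pmod{\pi}$ on the chosen neighbourhood; combined with minimality and Corollary \ref{cor:MinimalLagrangianImmersion} this ensures $s$ vanishes on $TM^n$ and reduces \eqref{eq:DerivativeOfTheta} to the clean identity $e_i(\theta_j) = h_{jj}^i$. The argument then splits according to whether all angle functions coincide mod $\pi$ or not.

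In the all-equal case write $\theta := \theta_1 = \cdots = \theta_n$. Then $h_{jj}^i = e_i(\theta)$ is independent of $j$, and minimality gives $n\, e_i(\theta) = \sum_j h_{jj}^i = 0$, so $\theta$ is locally constant and every $h_{jj}^i$ vanishes. By the total symmetry of the components of $h$, this forces $h_{ij}^k = 0$ whenever two of the indices coincide. Using Lemma \ref{lem:changeA} to switch to Choice \ref{ex:choiceA} (which preserves the property that all $\theta_j$ are equal, since it shifts them by the same function), Theorem \ref{theo:LagrangiansandGaussmaps} produces an associated hypersurface $a: M^n \to H^{n+1}_1(-1)$ with principal curvatures $\lambda_j = \cot\theta_j$ all equal, i.e.\ totally umbilical. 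Theorem \ref{theo:Palmerformula} then forces the common umbilical value to be constant, and the classical classification of totally umbilical spacelike hypersurfaces of anti-de Sitter space identifies $a$ with an open part of a standard embedding of $H^n$ into $H^{n+1}_1(-1)$ as in Example \ref{ex:1princcurv}.

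If the angles are neither all equal nor all distinct mod $\pi$, then after relabelling I would have an open subset on which $\theta_1 \equiv \theta_2 \pmod \pi$ while $\theta_3 \not\equiv \theta_1 \pmod \pi$, and I aim for a contradiction. Plugging $(i,j,k) = (1,3,2)$ into the middle equation of Lemma \ref{lem:EquationsConstSecCurv} gives $h_{13}^2 \sin^2(\theta_3 - \theta_1) = 0$, hence $h_{13}^2 = h_{12}^3 = 0$ by total symmetry; applying \eqref{eq:ConstSecCurv1} with $(i,j,k) = (1,2,3)$ yields $h_{11}^3 = h_{22}^3$, consistent with $e_3(\theta_1) = e_3(\theta_2)$. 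For $n \geq 4$ the last equation of the lemma, applied to quadruples $(i,j,k,\ell)$ that straddle the two angle classes, kills many further components. I would then substitute the surviving components into the sectional-curvature formula \eqref{eq:sectionalcurvature} and impose $K(\mathrm{span}\{e_a,e_b\})$ equal to the common constant $c$ for every pair $(a,b)$; combined with the Codazzi equation \eqref{eq:CodazziEqLagSubm} and minimality $\sum_j h_{jj}^i = 0$, this would yield an inconsistent system.

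The hard part will be orchestrating the contradiction in the mixed case, since Lemma \ref{lem:EquationsConstSecCurv} degenerates to $0 = 0$ as soon as a triple contains two equal angles, so one must carefully exploit the triples and (for $n \geq 4$) quadruples that mix both angle classes. This is exactly where the argument parallels the $Q^n$ analogue in \cite{LMVdVVW}, and the computations transport essentially verbatim because, as noted after \eqref{eq:RiemCurQn*}, the curvature tensor of $\Q$ differs from that of $Q^n$ only by an overall sign; the hypothesis $n \geq 3$ is used precisely to guarantee the existence of a triple containing both angle classes.
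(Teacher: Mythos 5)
Your treatment of the all-equal case is essentially sound: with Choice \ref{ex:choiceA2} and Corollary \ref{cor:MinimalLagrangianImmersion} you get $e_i(\theta_j)=h^i_{jj}$, minimality then makes the common angle locally constant, Theorem \ref{theo:LagrangiansandGaussmaps} gives a totally umbilical spacelike hypersurface, Theorem \ref{theo:Palmerformula} makes the umbilical factor constant, and the classification of isoparametric spacelike hypersurfaces of $H^{n+1}_1(-1)$ (the reference \cite{Lopez} used in the proof of Theorem \ref{theo:parallel_sff}) identifies it with a standard embedding of $H^n$. (Note the paper itself does not print a proof of this lemma; it defers to the $Q^n$ analogue in \cite{LMVdVVW}, so there is no in-paper argument to compare against.)

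The genuine gap is the mixed case, which is the actual content of the lemma, and which you do not prove: you derive $h^2_{13}=0$ and $h^3_{11}=h^3_{22}$ correctly from Lemma \ref{lem:EquationsConstSecCurv}, but then only announce that substituting into \eqref{eq:sectionalcurvature} ``would yield an inconsistent system'' and concede that ``orchestrating the contradiction'' remains to be done. These purely algebraic consequences are nowhere near a contradiction. For instance, in dimension $3$ with $\theta_1\equiv\theta_2\not\equiv\theta_3$, the identity $e_i(\theta_1)=e_i(\theta_2)$ gives $h^i_{11}=h^i_{22}$ and hence $h^i_{33}=-2h^i_{11}$, and imposing $K_{12}=K_{13}=K_{23}$ then yields only the relations $(h^1_{11})^2=(h^2_{11})^2$ and $2(h^3_{11})^2+2\sum_k(h^k_{11})^2+4(h^1_{11})^2=2\sin^2(\theta_1-\theta_3)$, which admit nontrivial solutions pointwise; ruling them out requires differentiating these constraints along the frame, feeding in \eqref{eq:relation_h_omega} and the Codazzi equation \eqref{eq:CodazziEqLagSubm}, and this is precisely the lengthy computation that constitutes the proof. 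In addition, for $n\geq 4$ the coincidence pattern need not consist of just two classes, so the case analysis must cover several distinct values of which at least two coincide, not only the configuration you describe. Appealing to the sign-reversal of the curvature tensor relative to $Q^n$ is a legitimate reason to expect the computation to transfer, but as written your proposal is a plan for a proof of the hardest step rather than a proof of it.
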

	
\begin{lemma}\label{prop:CSCHijkZero}
Let $f:M^n\rightarrow \Q$ be a minimal Lagrangian immersion such that $M^n$ has constant sectional curvature. Then $h_{ij}^k=0$ for all mutually different indices $i,j$ and $k$. 
\end{lemma}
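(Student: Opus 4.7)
The claim is vacuous when $n\le 2$, so suppose $n\ge 3$. By Lemma~\ref{prop:CSCAllAnglesDiffOrEqual}, the local angle functions $\theta_1,\ldots,\theta_n$ are either all equal or pairwise distinct modulo $\pi$. In the first case, the same lemma identifies $f$ with the Gauss map of a standard embedding of an open part of $H^n$ into $H^{n+1}_1(-1)$, which is totally geodesic by Example~\ref{ex:1princcurv}; thus $h\equiv 0$ and the claim follows. From now on I assume the $\theta_j$ are pairwise distinct mod $\pi$, and by Choice~\ref{ex:choiceA2} I pick $A\in\mathcal A$ so that $\theta_1+\cdots+\theta_n\equiv 0\pmod{\pi}$. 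By Corollary~\ref{cor:MinimalLagrangianImmersion} the one-form $s$ then vanishes on $TM^n$, so \eqref{eq:DerivativeOfTheta} reads $h_{jj}^i=e_i(\theta_j)$.

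Fix distinct $i,j,k$ and assume, for contradiction, that $h_{ij}^k\neq 0$. The second identity of Lemma~\ref{lem:EquationsConstSecCurv} combined with distinctness forces $2\theta_k\equiv\theta_i+\theta_j\pmod{\pi}$. Applying the same identity cyclically to $h_{jk}^i$ and $h_{ki}^j$ (which equal $h_{ij}^k$ by the total symmetry of $h$) gives the full system
\[ 2\theta_i\equiv \theta_j+\theta_k,\qquad 2\theta_j\equiv \theta_i+\theta_k,\qquad 2\theta_k\equiv \theta_i+\theta_j \pmod{\pi}. \]

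If $n\ge 4$, pick any $\ell\notin\{i,j,k\}$ and apply the four-index identity of Lemma~\ref{lem:EquationsConstSecCurv} both to $h_{ij}^k$ and to $h_{ki}^j$. Distinctness yields $2\theta_\ell\equiv\theta_i+\theta_j$ and $2\theta_\ell\equiv\theta_k+\theta_i\pmod{\pi}$, whence $\theta_j\equiv\theta_k\pmod{\pi}$, contradicting distinctness. If $n=3$, summing the three cyclic relations together with $\theta_1+\theta_2+\theta_3\equiv 0\pmod{\pi}$ gives $3\theta_m\equiv 0\pmod{\pi}$ for each $m$, so $\{\theta_1,\theta_2,\theta_3\}\equiv\{0,\pi/3,2\pi/3\}\pmod{\pi}$. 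Since these values are isolated, each $\theta_m$ is locally constant, hence $h_{jj}^i=e_i(\theta_j)=0$ for all $i,j$; combined with minimality, the only possibly nonzero component of $h$ is $\mu:=h_{12}^3$. The contradiction is then extracted from the Codazzi equation \eqref{eq:CodazziEqLagSubm} evaluated at $(X,Y,Z)=(e_1,e_2,e_2)$: its right-hand side simplifies to $\sin(2\theta_1-2\theta_2)\,Je_1=-\tfrac{\sqrt{3}}{2}Je_1$, while computing the left-hand side by expanding $\bar\nabla h$ and using Proposition~\ref{prop:verband_theta_h} to replace every nonzero connection form $\omega_j^k(e_i)$ by $\cot(\theta_j-\theta_k)\,\mu$ produces a positive real multiple of $\mu^2\,Je_1$; equating the two forces $\mu^2$ to be negative, so $\mu=0$.

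The main obstacle will be the dimension-three case: for $n\ge 4$ the angle relations alone immediately rule out $h_{ij}^k\neq 0$, but for $n=3$ the angle relations and the Gauss equation are perfectly self-consistent, and the contradiction has to be produced by a careful Codazzi computation whose right-hand side is nonzero precisely because the angles $\{0,\pi/3,2\pi/3\}$ make $B$ and $C$ non-scalar. The calculation needs both total symmetry of the cubic form $g(h(\cdot,\cdot),J\cdot)$ and the explicit description of the connection forms in terms of the single parameter $\mu$.
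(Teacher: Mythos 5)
Your argument is correct, and it supplies a proof that the paper itself omits (the authors defer to the $Q^n$ analogue in \cite{LMVdVVW}). The skeleton is the expected one: Lemma~\ref{prop:CSCAllAnglesDiffOrEqual} disposes of the all-angles-equal case (where one could even argue more cheaply, since \eqref{eq:relation_h_omega} with $\theta_j\equiv\theta_k$ forces $h_{ij}^k=0$ directly, without invoking total geodesy); the identities of Lemma~\ref{lem:EquationsConstSecCurv} together with total symmetry of $h$ give the cyclic relations $2\theta_m\equiv\theta_i+\theta_j$; and the fourth index kills the case $n\ge 4$. Where you genuinely diverge from the paper's machinery is the $n=3$ endgame. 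Having pinned the angles to the locally constant values $\{0,\pi/3,2\pi/3\}\bmod\pi$, the paper's own template (see Case~1 in the proof of Proposition~\ref{prop:CSCdim3}) would conclude via Theorem~\ref{theo:LagrangiansandGaussmaps} that the corresponding spacelike hypersurface of $H^4_1(-1)$ is isoparametric and appeal to the classification in \cite{Lopez}, which excludes three distinct principal curvature values; you instead run the Codazzi equation directly and obtain $\mu^2=\cos(\theta_1-\theta_2)\sin(\theta_2-\theta_3)\sin(\theta_3-\theta_1)=-3/8<0$, which I have checked is ordering-independent (your specific signs $-\tfrac{\sqrt3}{2}$ and ``positive multiple of $\mu^2$'' correspond to one ordering of the angles, but the product above is invariant, so the contradiction is robust). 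Your route is more self-contained, avoiding the external classification result; the paper's route is shorter but leans on \cite{Lopez}. Either is acceptable.
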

	
Theorem \ref{theo:csc} is proven by considering the dimensions $n=2$, $n=3$, $n=4$ and $n\geq 5$ separately. The only differences with respect to the situation of $Q^n$ occur for $n=2$ and $n=3$, so we will prove Propositions \ref{prop:CSCdim2} and \ref{prop:CSCdim3} below, but not Proposition \ref{prop:CSCdim>4} (which corresponds to two propositions in \cite{LMVdVVW}: one for $n=4$ and one for $n \geq 5$).
	
\subsection{Classification in dimension $n=2$}
	
\begin{proposition} \label{prop:CSCdim2}
If $f:M^2\rightarrow {Q^*}^2$ is a minimal Lagrangian immersion such that the induced metric on $M^2$ has constant Gaussian curvature, then either the induced metric has Gaussian curvature $-2$ and $f$ is the Gauss map of a part of a standard embedding $H^2 \rightarrow H^3_1(-1)$, or the induced metric is flat and $f$ is the Gauss map of a part of a standard embedding $H^1 \times H^1 \rightarrow H^3_1(-1)$.
\end{proposition}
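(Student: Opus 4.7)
The plan is to first choose the almost product structure $A \in \mathcal{A}$ along $f$ via Choice~\ref{ex:choiceA2}, so that $\theta_1 + \theta_2 = 0 \mod \pi$. By Corollary~\ref{cor:MinimalLagrangianImmersion}, the one-form $s$ then vanishes on tangent vectors, so relation \eqref{eq:DerivativeOfTheta} of Proposition~\ref{prop:verband_theta_h} simplifies to $e_i(\theta_j) = h_{jj}^i$. Writing $\theta := \theta_1$, hence $\theta_2 = -\theta \mod \pi$, and setting $a := e_1(\theta)$, $b := e_2(\theta)$, one sees that minimality is automatic (since $h_{11}^i + h_{22}^i = e_i(\theta_1 + \theta_2) = 0$) and that the total symmetry of $h_{ij}^k$ completely determines
\[
h_{11}^1 = a,\quad h_{22}^2 = -b,\quad h_{11}^2 = h_{12}^1 = b,\quad h_{22}^1 = h_{12}^2 = -a.
\]
Formula \eqref{eq:sectionalcurvature}, with $\theta_1 - \theta_2 = 2\theta$, then becomes $c = -2\cos^2(2\theta) - 2(a^2 + b^2)$, so in particular $a^2 + b^2 = -\tfrac{c}{2} - \cos^2(2\theta)$.

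Next, one extracts the Levi Civita connection form $\omega := \omega_1^2$. From \eqref{eq:relation_h_omega}, on the open set where $\sin(2\theta) \neq 0$, we have $\omega(e_1) = b\cot(2\theta)$ and $\omega(e_2) = -a\cot(2\theta)$. Evaluating the Codazzi equation \eqref{eq:CodazziEqLagSubm} at $(X,Y,Z) = (e_1,e_2,e_1)$ and projecting the result onto $Je_2$ yields the single new identity
\[
e_1(a) + e_2(b) = 3(a^2+b^2)\cot(2\theta) - \sin(4\theta);
\]
the $Je_1$-component of the same Codazzi relation recovers only the Lie bracket identity $e_1(b) = e_2(a)$.

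The heart of the proof is then to compute $d\omega$ by direct exterior differentiation (writing $\omega = \cot(2\theta)(b\,e^1 - a\,e^2)$ in the orthonormal coframe $\{e^1,e^2\}$ dual to $\{e_1,e_2\}$ and using the Cartan equations $de^1 = -\omega \wedge e^2$, $de^2 = \omega \wedge e^1$), and to compare the result with the $2$-dimensional structure identity $d\omega = -c\,e^1 \wedge e^2$ forced by $K \equiv c$. After substituting the Codazzi identity above to eliminate $e_1(a) + e_2(b)$, then the Gauss relation $a^2 + b^2 = -\tfrac{c}{2} - \cos^2(2\theta)$, and finally using the trigonometric simplification $\cos(4\theta) + \sin^2(2\theta) = \cos^2(2\theta)$, the whole comparison collapses to the clean algebraic constraint
\[
(a^2+b^2)\cos^2(2\theta) = 0.
\]
On the complementary open set where $\sin(2\theta) = 0$, the function $\theta$ is locally constant in $\tfrac{\pi}{2}\mathbb{Z}$, so $a = b = 0$ there automatically. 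Hence in all cases $a \equiv b \equiv 0$, which forces $h \equiv 0$; that is, $f$ is totally geodesic.

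Theorem~\ref{theo:parallel_sff} now identifies $f$, up to an isometry of ${Q^*}^2$, as the Gauss map of an open part of a standard embedding $H^2 \to H^3_1(-1)$ or $H^1 \times H^1 \to H^3_1(-1)$. The Gaussian curvatures of the two induced metrics were already computed from \eqref{eq:sectionalcurvature} to be $-2$ and $0$ respectively, pinning down the two admissible values of $c$. The main obstacle in this argument is the bookkeeping required for the $d\omega$ comparison; once the cancellation to $(a^2+b^2)\cos^2(2\theta) = 0$ is in hand, the remainder is essentially a direct citation of earlier results.
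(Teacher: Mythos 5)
Your setup is sound and consistent with the machinery of Sections~3 and~4: with $A$ chosen so that $\theta_1+\theta_2=0$, the identities $h_{11}^1=a$, $h_{11}^2=h_{12}^1=b$, $h_{22}^1=h_{12}^2=-a$, $h_{22}^2=-b$, the values $\omega(e_1)=b\cot(2\theta)$, $\omega(e_2)=-a\cot(2\theta)$, and the Codazzi identity $e_1(a)+e_2(b)=3(a^2+b^2)\cot(2\theta)-\sin(4\theta)$ are all correct. The gap is precisely in the step you call the heart of the proof: the comparison of $d\omega$ with $-c\,e^1\wedge e^2$ is a tautology and does \emph{not} collapse to $(a^2+b^2)\cos^2(2\theta)=0$. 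Carrying out the computation with $e_1(\theta)=a$, $e_2(\theta)=b$ gives
\begin{align*}
d\omega(e_1,e_2) &= e_1(\omega(e_2))-e_2(\omega(e_1))+\omega(e_1)^2+\omega(e_2)^2\\
&= -\bigl(e_1(a)+e_2(b)\bigr)\cot(2\theta)+2(a^2+b^2)\csc^2(2\theta)+(a^2+b^2)\cot^2(2\theta)\\
&= -2(a^2+b^2)\cot^2(2\theta)+\sin(4\theta)\cot(2\theta)+2(a^2+b^2)\csc^2(2\theta)\\
&= 2(a^2+b^2)+2\cos^2(2\theta),
\end{align*}
which is exactly $-c$ by the Gauss relation you already wrote down. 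This is to be expected: the identity $d\omega=-K\,e^1\wedge e^2$ is just the intrinsic computation of the curvature, and it must reproduce \eqref{eq:sectionalcurvature} for \emph{any} Lagrangian surface, whether or not $K$ is constant. In particular your argument never genuinely uses the constancy of $c$, so it cannot yield a pointwise obstruction; there is none. Consequently the conclusion $a\equiv b\equiv 0$ is not established, and the proof fails at its central step.

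To salvage a direct argument you would have to exploit $dc=0$, i.e.\ differentiate $a^2+b^2=-c/2-\cos^2(2\theta)$ to get $a\,e_i(a)+b\,e_i(b)=\sin(4\theta)\,e_i(\theta)$, combine this with the Codazzi relations, and then analyze the second-order integrability conditions of the resulting first-order system --- a substantially longer computation in the spirit of the paper's three-dimensional case (Proposition~\ref{prop:CSCdim3}). The paper itself avoids all of this for $n=2$: its proof of Proposition~\ref{prop:CSCdim2} is a one-line reduction using the isometry ${Q^*}^2\cong H^2(-4)\times H^2(-4)$ and the known classification of constant-curvature minimal Lagrangian surfaces in $H^2(c)\times H^2(c)$ from \cite{GVWX}.
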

	
\begin{proof}
Since ${Q^*}^2$ is isometric to $H^2(-4)\times H^2(-4)$, the result follows from the classification of minimal Lagrangian surfaces with constant Gaussian curvature in $H^2(c)\times H^2(c)$, given in \cite{GVWX}.
\end{proof}
	
\subsection{Classification in dimension $n=3$}
	
\begin{proposition} \label{prop:CSCdim3}
Let $f:M^3\rightarrow {Q^*}^3$ be a Lagrangian minimal immersion with constant sectional curvature, then $M^3$ has constant sectional curvature $-2$ and $f$ is the Gauss map of part of a standard embedding $H^3(c)\rightarrow H^4_1(-1)$.
\end{proposition}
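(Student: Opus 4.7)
The plan is to split the argument into two exhaustive cases via Lemma \ref{prop:CSCAllAnglesDiffOrEqual} and dispose of the second by contradiction. Lemma \ref{prop:CSCAllAnglesDiffOrEqual} with $n=3$ says that locally either (i) all three angle functions coincide modulo $\pi$, or (ii) they are pairwise distinct modulo $\pi$. In case (i), the same lemma identifies $f$ as the Gauss map of a standard embedding of an open part of $H^3$ into $H^4_1(-1)$; this immersion is totally geodesic by Example \ref{ex:1princcurv}, so substituting $\theta_i\equiv\theta_j\pmod{\pi}$ and $h\equiv 0$ into \eqref{eq:sectionalcurvature} gives $K=-2$, exactly as claimed.

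For case (ii), I would fix $A\in\mathcal{A}$ using Choice \ref{ex:choiceA2} so that $\theta_1+\theta_2+\theta_3 \equiv 0 \pmod{\pi}$. By Corollary \ref{cor:MinimalLagrangianImmersion} the one-form $s$ then vanishes on $TM^3$, and by Lemma \ref{prop:CSCHijkZero} the component $h^k_{ij}$ vanishes whenever $i,j,k$ are pairwise distinct. Consequently the second fundamental form is encoded in the nine scalars $a_{ij}:=h^j_{ii}$, subject to the three minimality relations $\sum_i a_{ij}=0$, while \eqref{eq:DerivativeOfTheta} now reduces to $e_k(\theta_i)=a_{ik}$. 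Using $\theta_i+\theta_k-2\theta_j \equiv -3\theta_j \pmod{\pi}$, the identity \eqref{eq:ConstSecCurv1} becomes
\[
a_{ik}\,\sin(\theta_k-\theta_i)\,\sin(3\theta_j) \;=\; a_{jk}\,\sin(\theta_k-\theta_j)\,\sin(3\theta_i)
\]
for each permutation of $(1,2,3)$; at generic points where the three $\sin(3\theta_l)$ are all nonzero, this proportionality together with minimality expresses each $a_{ij}$ as an explicit trigonometric function of the angles and three residual scalars $C_1,C_2,C_3$.

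I would then substitute these expressions into the Gauss equation $c=-2\cos^2(\theta_i-\theta_j)+\sum_l a_{il}a_{jl}-a_{ij}^2-a_{ji}^2$ for each of the three pairs $(i,j)$ and form differences across pairs to exploit constancy of $c$. Combined with the integrability conditions $e_ie_j(\theta_k)-e_je_i(\theta_k)=[e_i,e_j](\theta_k)$ and the Codazzi equation \eqref{eq:CodazziEqLagSubm} (which via \eqref{eq:relation_h_omega} also expresses the connection one-forms $\omega^k_j(e_i)$ in terms of the $a_{ij}$ and $\cot(\theta_i-\theta_j)$), this yields an overdetermined trigonometric system on $(\theta_1,\theta_2,\theta_3)$ that forces two of the angles to coincide modulo $\pi$, contradicting case (ii). The main obstacle is this concluding algebraic elimination, in particular the careful handling of degenerate points where some $\sin(3\theta_l)$ or some $C_k$ vanishes; this is precisely where the proof departs from its $Q^n$ counterpart in \cite{LMVdVVW}, since the opposite sign of the ambient curvature tensor \eqref{eq:RiemCurQn*} flips the $-2\cos^2(\theta_i-\theta_j)$ contributions in the Gauss equation and hence the sign patterns needed to close the argument.
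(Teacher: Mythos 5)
Your setup matches the paper's: the split via Lemma \ref{prop:CSCAllAnglesDiffOrEqual}, the normalization $\theta_1+\theta_2+\theta_3\equiv 0 \pmod\pi$ from Choice \ref{ex:choiceA2}, the vanishing of $s$ and of $h_{12}^3$, the rewriting of \eqref{eq:ConstSecCurv1} using $\theta_i+\theta_k-2\theta_j\equiv -3\theta_j$, and the parametrization of the $h_{ii}^j$ by three residual scalars are all exactly what the paper does (its $\alpha_1,\alpha_2,\alpha_3$ are your $C_k$ up to normalization), and case (i) is handled correctly. However, the proposal stops precisely where the proof becomes nontrivial, and on two points the sketch is not merely incomplete but misdirected. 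First, the ``degenerate points where some $\sin(3\theta_l)$ vanishes'' are not a measure-zero nuisance: since the angles are pairwise distinct mod $\pi$, the vanishing of $\sin(3\theta_l)$ is exactly the vanishing of the quantities $x=\sin(\theta_1-\theta_2)\sin(\theta_1+\theta_2-2\theta_3)$, $y$, $z$ (which satisfy $x+y+z=0$), and these can a priori vanish on an open set. The paper needs two separate, substantive arguments for the subcases where two of them vanish and where exactly one vanishes; the latter uses constancy of the sectional curvatures $K_{13}$ and $K_{23}$ to force the angle differences to be constant and then extracts a contradiction. You cannot simply work ``at generic points.''

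Second, in the generic case the elimination does not force two angles to coincide modulo $\pi$: what actually comes out (after showing $\alpha_i\alpha_j=0$ for all pairs from specific $Je_k$-components of Codazzi, hence at least two residual scalars vanish identically, and then combining further Codazzi components with the $e_3$-derivative of the Gauss equation) is that all three angle functions are \emph{constant}. Constant, pairwise distinct angles are not internally contradictory for a Lagrangian submanifold; the contradiction requires translating back, via Theorem \ref{theo:LagrangiansandGaussmaps}, to a spacelike hypersurface of $H^4_1(-1)$ with three distinct constant principal curvatures and invoking the classification of spacelike isoparametric hypersurfaces of anti-de Sitter space in \cite{Lopez}, which admits only two families, each with at most two distinct principal curvatures. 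That external input is entirely absent from your proposal, and without it (or a substitute) the argument cannot close even if the trigonometric elimination is carried out. So the gap is genuine: the concluding computation, the degenerate subcases, and the final contradiction mechanism all still have to be supplied.
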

	
	\begin{proof}
		Choose $A\in \mathcal{A}$ as in Choice \ref{ex:choiceA2}. From Lemma~\ref{prop:CSCAllAnglesDiffOrEqual} and Lemma~\ref{prop:CSCHijkZero}, it follows that we only need to consider the case where all local angle functions are different modulo $\pi$ and $h_{12}^3=0$. 
		
		We denote
		\begin{align*}
		&x= \sin(\theta_1-\theta_2)\sin(\theta_1+\theta_2-2\theta_3),\\
		&y= \sin(\theta_2-\theta_3)\sin(\theta_2+\theta_3-2\theta_1),\\
		&z=\sin(\theta_3-\theta_1)\sin(\theta_3+\theta_1-2\theta_2).
		\end{align*}
		It follows from simple trigonometric identities that $x+y+z=0$. Moreover, \eqref{eq:ConstSecCurv1} is equivalent to
		\begin{equation}\label{eq:threefromprop}
		h_{22}^1x+h_{33}^1z=0,\quad h_{11}^2x+h_{33}^2y=0,\quad h_{11}^3z+h_{22}^3y=0.
		\end{equation}
		We will distinguish three cases.
		
		\smallskip
		
		\textit{Case 1: At least two of the local functions $x,y$ and $z$ are zero.} In this case all 3 must vanish since $x+y+z=0$. Since the local angle functions are mutually different modulo $\pi$, we may assume $\theta_1<\theta_2<\theta_3$. It follows from $x=y=z=0$ that $3\theta_1=3\theta_2=3\theta_3 \mod \pi$. Therefore, $\theta_1,\theta_2$ and $\theta_3$ are all constant. This implies that the principal curvatures of the corresponding spacelike hypersurface in anti-de Sitter are also constant, which contradicts the result of \cite{Lopez}.
		
		\smallskip
		
		\textit{Case 2: Exactly one of the local functions $x,y$ and $z$ is zero.} Without loss of generality, we may assume that $x=0$, and $y=-z\neq0$. In follows from \eqref{eq:threefromprop} that $h_{33}^1=h_{33}^2=0$ and $h_{11}^3=h_{22}^3$. Since the angle functions are all different, the assumption $x=0$ implies that $\theta_1+\theta_2-2\theta_3=0\mod \pi$. Deriving this expression and using \eqref{eq:DerivativeOfTheta} gives $h_{11}^i+ h_{22}^i-2h_{33}^i=0$ for all $i\in\{1,2,3\}$. By the minimality condition, we also have that $h_{11}^i+h_{22}^i+h_{33}^i=0$, thus we get $h_{33}^i=0$ and $h_{11}^i+h_{22}^i=0$ for all $i\in\{1,2,3\}$. Therefore, the only possibly non-zero components of the second fundamental form are $h_{11}^1=-h_{22}^1$ and $h_{11}^2=-h_{22}^2$. By \eqref{eq:sectionalcurvature}, the sectional curvatures of the planes spanned by $\{e_1,e_2\}, \{e_1,e_3\}$ and $\{e_2,e_3\}$ are $K_{12}=-2\cos^2(\theta_2-\theta_1)-2(h_{11}^1)^2-2(h_{22}^2)^2, K_{13}=-2\cos^2(\theta_3-\theta_1)$ and $K_{23}=-2\cos(\theta_3-\theta_2)$. It follows from the latter two that $\theta_3-\theta_1$ and $\theta_3-\theta_2$ are constant. Deriving this  and using \eqref{eq:DerivativeOfTheta} gives $h_{11}^i=0$ and $h_{22}^i=0$. We conclude that the submanifold is totally geodesic, but it follows from $K_{12}=K_{13}$ that $y=0$, contradicting the assumption. 
		
		\smallskip
		
		\textit{Case 3: None of the local functions $x,y$ and $z$ are zero.} We work on an open subset of $M^n$ where none of the functions vanish. It follows from \eqref{eq:ConstSecCurv1} and the minimality condition that there are local functions $\alpha_1,\alpha_2$ and $\alpha_3$ on this subset such that 
		\begin{equation}
		\centering
		\begin{tabular}{l l l}
		$h_{11}^1=\alpha_1(z-x),$ & $h_{11}^2 =-\alpha_2 y,$ & $h_{11}^3=-\alpha_3 y$,\\
		$h_{22}^1=-\alpha_1 z,$ & $h_{22}^2=\alpha_2(y-x),$ & $h_{22}^3=\alpha_3 z$,\\
		$h_{33}^1=\alpha_1 x,$ & $h_{33}^2=\alpha_2 x,$ & $h_{33}^3=\alpha_3 (y-z).$\\
		\end{tabular}
		\end{equation}
		By Corollary \ref{cor:MinimalLagrangianImmersion} and equation (\ref{eq:DerivativeOfTheta}) we find the derivatives of the angle functions. Substituting these in the Codazzi equation \eqref{eq:CodazziEqLagSubm} for $(X,Y,Z)=(e_1,e_2,e_3)$ and using $\theta_1+\theta_2+\theta_3=0\mod\pi$ to eliminate $\theta_3$, we obtain
		\begin{equation}\label{eq:DerivativeAlphas}
		\begin{aligned}
		&e_1(\alpha_3)=	\frac{1}{4} \alpha_1 \alpha_3 \csc(2 \theta_1 + \theta_2)( \cos(4 \theta_1 + 5 \theta_2) + 7 \cos(3 \theta_2) - 5 \cos(2 \theta_1 + \theta_2)\\
		&\phantom{e_1(\alpha_3)=	\frac{1}{4}} - \cos(3 (2 \theta_1 + \theta_2)) -2 \cos(4\theta_1 - \theta_2) ), \\
		&e_2(\alpha_3)=\frac{1}{4} \alpha_2 \alpha_3 \csc(\theta_1 + 2 \theta_2)(-\cos(5 \theta_1 + 4 \theta_2)-7 \cos(3 \theta_1)  + 5 \cos(\theta_1 + 2 \theta_2) \\
		& \phantom{e_1(\alpha_3)=	\frac{1}{4}} +\cos(3 (\theta_1 + 2 \theta_2)) + 2 \cos(\theta_1 - 4 \theta_2) ).
		\end{aligned}
		\end{equation}
		Similarly, for $(X,Y,Z)=(e_2,e_3,e_1)$ we obtain		
		\begin{equation}
		\begin{aligned}
		&e_3(\alpha_1)=\frac{1}{4} \alpha_1 \alpha_3\csc(2 \theta_1 + \theta_2) (-\cos(4 \theta_1 - \theta_2) - 7 \cos(3 \theta_2) + 5 \cos(2 \theta_1 + \theta_2) \\
		&\phantom{e_3(\alpha_1)=\frac{1}{4}}  + \cos(6 \theta_1 + 3 \theta_2) + 2 \cos(4 \theta_1 + 5 \theta_2)) ,\\
		&e_2(\alpha_1)=-\frac{1}{4} \alpha_1 \alpha_2 \csc(\theta_1 - \theta_2)(\cos(3 \theta_1 - 3 \theta_2)- 7 \cos(3 (\theta_1 + \theta_2)) + 5 \cos(\theta_1 - \theta_2)  \\
		&\phantom{e_3(\alpha_1)=\frac{1}{4}} -	\cos(5 \theta_1 + \theta_2)+ 	2 \cos(\theta_1 + 5 \theta_2)) 
		\end{aligned}
		\end{equation}
		and for $(X,Y,Z)=(e_3,e_1,e_2)$ we obtain
		\begin{equation}
		\begin{aligned}
		&e_3(\alpha_2)=\frac{1}{4} \alpha_2 \alpha_3 \csc(\theta_1 + 2 \theta_2)( - 
		\cos(\theta_1 - 4 \theta_2)-7 \cos(3 \theta_1) + 5 \cos(\theta_1 + 2 \theta_2)  \\
		&\phantom{e_3(\alpha_2)=\frac{1}{4}} + 
		\cos(3 (\theta_1 + 2 \theta_2))+ 2 \cos(5 \theta_1 + 4 \theta_2)) ,\\
		&e_1(\alpha_2)=-\frac{1}{4} \alpha_1 \alpha_2 \csc(\theta_1 - \theta_2)( -\cos(\theta_1 + 5 \theta_2) - 7 \cos(3 (\theta_1 + \theta_2))+5 \cos(\theta_1 - \theta_2) \\
		&\phantom{e_3(\alpha_2)=\frac{1}{4}}+\cos(3 (\theta_1 - \theta_2))  + 2 \cos(5 \theta_1 + \theta_2)) .
		\end{aligned}
		\end{equation}
		Using these expressions for the derivatives of $\alpha_1,\alpha_2$ and $\alpha_3$, the $Je_1$-component of the Codazzi equation for $(X,Y,Z)=(e_2,e_1,e_1)$ yields $\alpha_1\alpha_2=0$, the $Je_2$-component of the Codazzi equation for $(X,Y,Z)=(e_3,e_2,e_2)$ yields $\alpha_2\alpha_3=0$ and the $Je_3$-component of the Codazzi equation for $(X,Y,Z)=(e_1,e_3,e_3)$ yields $\alpha_1\alpha_3=0$. This implies that at least two of the functions $\{\alpha_1,\alpha_2,\alpha_3\}$ vanish identically. Because of the symmetry of the problem, we may assume without loss of generality that $\alpha_1=\alpha_2=0$. 
		
		The $Je_2$-component of the Codazzi equation for $(X,Y,Z)=(e_1,e_2,e_1)$ then gives
		\begin{equation}
		\alpha_3^2 = 	2 \cos(\theta_1 - \theta_2) \csc(3 \theta_1) \csc(3 \theta_2),\label{eq:GammaKwadraat}
		\end{equation}
		whereas the $Je_2$-component of the Codazzi equation for $(X,Y,Z)=(e_3,e_1,e_3)$ gives
		\begin{align}
		e_3(\alpha_3)=&\frac{1}{16} \csc(3 \theta_1) \csc(2 \theta_1 + \theta_2) \csc(\theta_1 + 
		2 \theta_2) [\alpha_3^2 \cos(2 \theta_1 - 5 \theta_2) + 4 \alpha_3^2 \cos(6 \theta_1 - 3 \theta_2) \label{eq:DerivativeGammaToE3}\\
		&- 	32 \cos(2 \theta_1 + \theta_2) \sin(2 \theta_1 + \theta_2)^2 + \alpha_3^2 [\cos(8 \theta_1 + \theta_2)-16 \cos(
		4 \theta_1 - \theta_2) \nonumber\\
		& + 15 \cos(2 \theta_1 + \theta_2) + 4 \cos(3 (2 \theta_1 + \theta_2)) - \cos(5 (2 \theta_1 + \theta_2)) - 8 \cos(3 \theta_2) \nonumber\\
		&+ 	2 \sin(3 \theta_1) \sin(5 \theta_1 + 7 \theta_2)]].\nonumber
		\end{align}
		Substituting \eqref{eq:GammaKwadraat} and \eqref{eq:DerivativeGammaToE3} in the Codazzi equation for $(X,Y,Z)=(e_3,e_2,e_3)$ gives
		\begin{equation*}
		-5 \cos(\theta_1 - \theta_2) + 
		2 \cos(3 (\theta_1 - \theta_2)) + (1 + 
		2 \cos(2 (\theta_1 - \theta_2))) \cos(3\theta_1 + 3\theta_2 ) =0,\label{eq:EquationFullOfThetas}
		\end{equation*}
		while substituting them into the derivative of the Gauss equation for $(X,Y,Z,W)=(e_1,e_2,e_2,e_1)$ in the direction of $e_3$ yields
		\begin{equation*}
		1 + 2 \cos(2 (\theta_1 - \theta_2))=0.
		\end{equation*}
		Combining these last two equations, we obtain that all angle functions are constant, again contradicting the result in \cite{Lopez}.
	\end{proof}

\subsection{Classification in dimension $n\geq 4$}
	
\begin{proposition} \label{prop:CSCdim>4}
For $n\geq 4$, let $f:M^n\rightarrow \Q$ be a minimal Lagrangian immersion such that the induced metric on $M^n$ has constant sectional curvature. Then $f$ is the Gauss map of a part of a standard embedding of $H^n\rightarrow H^{n+1}_1(-1)$.
\end{proposition}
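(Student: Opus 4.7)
The plan is to follow the structural reductions already established: by Lemma \ref{prop:CSCAllAnglesDiffOrEqual}, the local angle functions $\theta_1,\ldots,\theta_n$ are either all equal modulo $\pi$---in which case the same lemma identifies $f$ as the Gauss map of a standard embedding of an open part of $H^n$ into $H^{n+1}_1(-1)$ and the conclusion is immediate---or mutually distinct modulo $\pi$. So the task is to exclude the latter alternative when $n\geq 4$.

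Assume the $\theta_i$ are pairwise distinct modulo $\pi$. By Lemma \ref{prop:CSCHijkZero} the only possibly nonzero components of $h$ are those of the form $h_{ii}^j$. I would fix $A$ as in Choice \ref{ex:choiceA2}, so that $\theta_1+\cdots+\theta_n\equiv 0\bmod\pi$ and, by Corollary \ref{cor:MinimalLagrangianImmersion}, the one-form $s$ vanishes on $TM^n$; then \eqref{eq:DerivativeOfTheta} reduces to $e_i(\theta_j)=h_{jj}^i$. For every triple of mutually distinct indices $i,j,k$, \eqref{eq:ConstSecCurv1} becomes
\begin{equation*}
h_{ii}^k\sin(\theta_k-\theta_i)\sin(\theta_i+\theta_k-2\theta_j) = h_{jj}^k\sin(\theta_k-\theta_j)\sin(\theta_j+\theta_k-2\theta_i),
\end{equation*}
and together with the minimality constraint $\sum_i h_{ii}^k=0$ this should determine each column $(h_{ii}^k)_{i\neq k}$ as a scalar multiple $\alpha_k$ of an explicit trigonometric vector in the $\theta_i$'s.

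For $n\geq 5$ I would exploit the abundance of admissible triples: cycling a third index $\ell$ through $\{1,\ldots,n\}\setminus\{k\}$ in the relation above gives consistency identities among the angle functions that, together with $\sum_j\theta_j\equiv 0\bmod\pi$, I expect to force every difference $\theta_i-\theta_j$, and hence every $\theta_j$ itself, to be constant. Then $e_i(\theta_j)=h_{jj}^i$ yields $h_{ii}^k=0$ for all $i,k$, and since $h_{ij}^k=0$ whenever $i,j,k$ are distinct, the whole second fundamental form vanishes. By Theorem \ref{theo:parallel_sff}, a totally geodesic minimal Lagrangian immersion into $\Q$ is the Gauss map of a standard $H^n$ or $H^k\times H^{n-k}$, which admits at most two distinct angle values, contradicting the standing assumption.

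The dimension $n=4$ is more delicate because fewer independent instances of the displayed relation are available; here I would descend to the Codazzi equation \eqref{eq:CodazziEqLagSubm} in the spirit of Case~3 of Proposition \ref{prop:CSCdim3}. Substituting the scalar expressions for the $h_{ii}^k$ into several carefully chosen components of Codazzi should yield successive identities of the form $\alpha_k\alpha_\ell=0$, leaving at most one $\alpha_k$ nonzero; a final confrontation between Codazzi and a differentiated Gauss equation should then produce a trigonometric identity in $(\theta_1,\ldots,\theta_4)$ solvable only when all angle functions are constant, whence $h\equiv 0$ and Theorem \ref{theo:parallel_sff} again supplies the contradiction. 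The hard part will be the handling of the degenerate loci where factors $\sin(\theta_i+\theta_k-2\theta_j)$ in the displayed relation vanish (the analogues of Cases~1 and~2 of Proposition \ref{prop:CSCdim3}): on those loci the relation degenerates, and one has to argue separately that the vanishing of such trigonometric factors imposes enough linear relations modulo $\pi$ among the $\theta_i$'s to again force them to be constant, producing the same final contradiction.
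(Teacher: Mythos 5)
The paper gives no proof of Proposition \ref{prop:CSCdim>4}: it explicitly defers to the $Q^n$ argument in \cite{LMVdVVW}, which is split into one proposition for $n=4$ and one for $n\geq 5$. Your outline reproduces exactly that strategy --- reduce via Lemma \ref{prop:CSCAllAnglesDiffOrEqual} and Lemma \ref{prop:CSCHijkZero}, fix $A$ as in Choice \ref{ex:choiceA2}, parametrize the diagonal components $h_{ii}^k$ by scalars $\alpha_k$ using \eqref{eq:ConstSecCurv1} and minimality, and then squeeze a contradiction out of Codazzi and (a differentiated) Gauss, splitting $n=4$ from $n\geq 5$ --- so the approach is the intended one.

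However, as written this is a roadmap rather than a proof: every decisive step is asserted in the conditional. For $n\geq 5$ you claim that cycling the third index in \eqref{eq:ConstSecCurv1} yields ``consistency identities among the angle functions'' forcing all $\theta_i-\theta_j$ to be constant, but \eqref{eq:ConstSecCurv1} couples the unknown functions $h_{ii}^k$ with the angles, so no pure angle identity falls out without first eliminating the $h_{ii}^k$; whether that elimination closes up (and what happens when the coefficient matrix degenerates because some $\sin(\theta_i+\theta_k-2\theta_j)$ vanishes, so that the solution space is no longer one-dimensional) is precisely the content of the proof and is not shown. For $n=4$ the statements ``should yield successive identities of the form $\alpha_k\alpha_\ell=0$'' and ``should then produce a trigonometric identity \dots solvable only when all angle functions are constant'' are again the entire substance of the argument --- compare the length of Case 3 in the proof of Proposition \ref{prop:CSCdim3}, where the analogous claims require explicit derivative formulas \eqref{eq:DerivativeAlphas}--\eqref{eq:DerivativeGammaToE3} and a nontrivial trigonometric endgame. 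You correctly flag the degenerate loci (the analogues of Cases 1 and 2 of Proposition \ref{prop:CSCdim3}) as the hard part, but you do not treat them. Your final step is fine: constant angles plus \eqref{eq:DerivativeOfTheta} with $s=0$ give $h\equiv 0$, and totally geodesic immersions carry at most two distinct angle values, contradicting pairwise distinctness for $n\geq 4$ (the paper's variant of this endgame invokes \cite{Lopez} directly). So: right skeleton, but the computations that constitute the proof are missing.
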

	
Combining Propositions \ref{prop:CSCdim2}, \ref{prop:CSCdim3} and \ref{prop:CSCdim>4} finishes the proof of Theorem \ref{theo:csc}.
	
	\bibliographystyle{amsplain}
	\bibliography{citations}
	
\end{document}